\def\presuper#1#2%
\tikzset{%
    symbol/.style={%
        ,draw=none
        ,every to/.append style={%
            edge node={node [sloped, allow upside down, auto=false]{$#1$}}}
    }
}
\newcommand{\Rr}{\mathbb{R}}
\newcommand{\Cc}{\mathbb{C}}
\newcommand{\Zz}{\mathbb{Z}}
\newcommand{\B}{\mathcal{B}}
\newcommand{\D}{\mathcal{D}}	
\newcommand{\DV}{\presuper{\V}{\mathcal{D}}}
\newcommand{\Dd}{\mathbb{D}}	
\newcommand{\Dirac}{\slashed D}	
\newcommand{\Ee}{\mathbb{E}}	
\newcommand{\Exp}{\mathrm{Exp}}	
\newcommand{\F}{\mathcal{F}}
\renewcommand{\H}{\mathcal{H}}	
\renewcommand{\L}{\mathcal{L}}	
\newcommand{\C}{\mathbf{C}}	
\newcommand{\Ct}{\widetilde{\C}}
\newcommand{\Cl}{\mathrm{Cl}}	
\newcommand{\A}{\mathcal{A}} 	
\newcommand{\U}{\mathcal{U}}  	
\newcommand{\Aroof}{\hat{A}(\nabla)} 
\newcommand{\cliff}{\mathbf{c}}	
\newcommand{\n}{\underline{n}}	
\renewcommand{\S}{\Symb}
\newcommand{\id}{\mathrm{id}}
\newcommand{\indV}{\mathrm{\presuper{\V}{\ind}}}  
\newcommand{\indW}{\mathrm{\presuper{\W}{\ind}}} 
\newcommand{\OmegaV}{\Omega^1(\A)}
\newcommand{\Tr}{\mathrm{Tr}}			
\newcommand{\TrV}{\mathrm{\presuper{\V}{\Tr}}}	
\newcommand{\TrsV}{\mathrm{\presuper{\V}{\Tr_s}}} 
\newcommand{\TrW}{\mathrm{\presuper{\W}{\Tr}}}	
\newcommand{\etaV}{\mathrm{\presuper{\V}{\eta}}} 
\newcommand{\etaVc}{\mathrm{\presuper{\V_1}{\eta}}} 
\newcommand{\etaW}{\mathrm{\presuper{\W}{\eta}}} 
\newcommand{\tr}{\mathrm{tr}}			
\newcommand{\LC}{\mathcal{\mathbf{LC}}}	
\newcommand{\LCt}{\widetilde{\LC}}
\newcommand{\LG}{\mathcal{\mathbf{LG}}}
\newcommand{\Cstar}{\mathbf{C^{\ast}}} 
\renewcommand{\H}{\mathcal{H}} 
\newcommand{\G}{\mathcal{G}}	
\newcommand{\Gad}{\G^{ad}}	
\newcommand{\Abdy}{\widetilde{\A}}	
\newcommand{\Gbdy}{\widetilde{\G}^{ad}} 
\newcommand{\Had}{\H^{ad}}
\newcommand{\Gop}{\mathrm{\G^{(0)}}} 
\newcommand{\Hop}{\mathrm{\H^{(0)}}}
\newcommand{\Mor}{\mathrm{Mor}}		
\renewcommand{\O}{\mathcal{O}}
\renewcommand{\L}{\mathcal{L}} 
\newcommand{\R}{\mathcal{R}}
\renewcommand{\S}{\mathcal{\mathbf{S}}}	
\newcommand{\V}{\mathcal{V}}		
\newcommand{\W}{\mathcal{W}}		
\newcommand{\N}{\mathcal{N}}		
\newcommand{\End}{\mathrm{End}}		
\newcommand{\Hom}{\mathrm{Hom}}		
\renewcommand{\P}{\mathcal{P}}		
\newcommand{\Diff}{\mathrm{Diff}}	
\newcommand{\supp}{\mathrm{supp}}	
\newcommand{\fop}{f^{(0)}}		
\newcommand{\ind}{\operatorname{ind}}
\newcommand{\iso}{\xrightarrow{\sim}}	
\newcommand{\nablaslash}{\nabla\hspace{-.8em}/\hspace{.3em}}
\newcommand*{\avint}{\mathop{\ooalign{$\int$\cr$-$}}}
\newcommand*{\avintY}{\mathop{\ooalign{$\int_Y$\cr$-$}}}
\newcommand{\avintV}{\mathrm{\presuper{\V}{\avint}}} 
\newcommand{\avintWY}{\mathrm{\presuper{\W}{\avintY}}} 
\newcommand*{\davintM}{\mathop{\ooalign{$\displaystyle \int_M$\cr$-$}}}
\newcommand*{\davintMc}{\mathop{\ooalign{$\displaystyle \int_{M_1}$\cr$-$}}}
\newcommand*{\davintY}{\mathop{\ooalign{$\displaystyle \int_Y$\cr$-$}}}
\newcommand{\davintVM}{\mathrm{\presuper{\V}{\davintM}}} 
\newcommand{\davintWY}{\mathrm{\presuper{\W}{\davintY}}} 
\newcommand{\davintVMc}{\mathrm{\presuper{\V_1}{\davintMc}}} 
\newtheorem{Thm}{Theorem}[section]
\newtheorem{Lem}[Thm]{Lemma}
\newtheorem{Prop}[Thm]{Proposition}
\theoremstyle{definition}
\newtheorem{Def}[Thm]{Definition}
\newtheorem{Exa}[Thm]{Example}
\newtheorem{Rem}[Thm]{Remark}
\begin{document}
\setcounter{page}{1}


\title{Index formulae for mixed boundary conditions on manifolds with corners}

\author[Karsten Bohlen]{Karsten Bohlen}

\address{$^{1}$ Universit\"at Regensburg, Germany}
\email{\textcolor[rgb]{0.00,0.00,0.84}{karsten.bohlen@mathematik.uni-regensburg.de}}


\subjclass[2000]{Primary 19K56; Secondary 46L80.}

\keywords{adiabatic groupoid, boundary conditions, Lie manifold.}


\begin{abstract}
We investigate the problem of calculating the Fredholm index of a geometric Dirac operator subject to local (e.g. Dirichlet and Neumann) and non-local (APS) boundary conditions posed on the strata of a manifold with corners.
The boundary strata of the manifold with corners can intersect in higher codimension. To calculate the index we introduce a glueing construction and a corresponding Lie groupoid. We describe the Dirac operator
subject to mixed boundary conditions via an equivariant family of Dirac operators on the fibers of the Lie groupoid. Using a heat kernel method with rescaling we derive a general index formula of the Atiyah-Singer type. 
\end{abstract} \maketitle

\section{Introduction}

On manifolds with boundary the Atiyah-Patodi-Singer (APS) index theorem yields an index formula for Fredholm operators which consists of a local contribution, depending on the Riemannian metric and a non-local contribution, the $\eta$-invariant.
The boundary conditions considered in the APS-theory are global projection conditions. In the present work our goal is to describe a generalization of the APS-theory which involves general and mixed boundary conditions.
Historically, boundary value problems were studied on manifolds with boundary, where the boundary may consist
of disjoint components, on which different types of boundary conditions could be imposed.
In the classical setup the problems under consideration involve questions of well-posedness for solutions, Fredholm conditions
for the operators involved, as well as the index theory of partial differential equations subject to mixed boundary conditions.
A mixed boundary value problem by definition is a partial differential equation subject to different boundary
conditions on the different pieces of the boundary. In this note our main focus lies on the index theory for mixed boundary value problems and for convenience we will
restrict attention to Dirac type operators. The classical Dirichlet problem and Neumann problem in such a mixed setup was investigated by Zaremba in 1910, \cite{Zar}. 
 Index formulae have been obtained by Gromov and Lawson \cite{GL} (relative index theorem) and Freed \cite{F} for 
Dirac type operators on manifolds of odd dimension, see also \cite{BB}.
In these works the boundary is assumed to be compact and to possibly consist of disjoint pieces.  
We think it is an interesting question to study more general domains, which may consist of boundaries
which are only piecewise smooth. Examples considered in the literature are Lipschitz domains \cite{JK}, \cite{MMT}, \cite{MT} and 
the complex analytic analogues of pseudoconvex domains \cite{Fe}. 
A convenient framework for the consideration of such singular domains are in terms of manifolds with corners and singular
structures defined on manifolds with corners, cf. \cite{ALN}, \cite{M}.
In our setup we study compact manifolds with corners which by definition consist of a boundary stratified by immersed
submanifolds. Therefore, in contrast to previous results, we in particular allow intersecting boundary components and
examine Dirac type operators subject to mixed boundary conditions on boundary components.
The main focus of this work however will be the index theory of such boundary value problems. 
We obtain index formulae in the spirit of the index theorem of Atiyah-Patodi-Singer.
In the case without mixed boundary conditions, posed on so-called regular strata, our results yield the known 
Atiyah-Patodi Singer index formula on a manifold with boundary \cite{M} and more generally on manifolds with corners 
\cite{BS}. 

\subsection{Overview}

\subsection*{Dirac operators}

A Lie manifold is a triple $(M, \A, \V)$, where $M$ is a compact manifold with corners and
$\V \subset \Gamma(TM)$ is a Lie algebra of smooth vector fields, cf. \cite{ALN}. 
Moreover, $\V$ is assumed to be a subalgebra of the Lie algebra $\V_b$ of all vector fields tangent to the boundary strata  
and a finitely generated  projective $C^{\infty}(M)$-module.
The compact manifold with corners $M$ is thought of as a compactification of a non-compact manifold with a degenerate,
singular metric which is of product type at infinity. 
We denote by $\partial M$ the (stratified) boundary of $M$ and by  $M_0 = M \setminus \partial M$ the interior.
By the Serre-Swan theorem there exists a vector bundle $\A \to M$ such that $\Gamma(\A) \cong \V$.
We make the standing assumption that $\A_{M_0} \cong TM_0$, the tangent bundle on the interior. 
The bundle $\A$ has the structure of a Lie algebroid and we fix throughout $\varrho \colon \A \to TM$ to denote the \emph{anchor} of the Lie algebroid, see e.g. \cite{Mack}. Moreover, we need a Lie groupoid $\G \rightrightarrows M$.
It is known that for any Lie structure with our assumption there is an $s$-connected Lie groupoid $\G$ such that $\A(\G) \cong \A$, cf. \cite{D}. 
Since we rely on the heat kernel of an arbitrary integrating Lie groupoid, we make the standing assumption that all Lie manifolds considered in this work have
heat kernels which are (transversally) smooth. It is conjectured, but yet not resolved in full generality, whether every Lie manifold has an integrating Lie groupoid with a smooth heat kernel, cf. \cite{So}, \cite{BS}.  
An \emph{$\A$-metric} is a Riemanian metric $g = g_{\A}$ on the interior $M_0$ which extends to a positive definite symmetric bilinear form
on $\A$, i.e. a euclidian structure on $\A$. We call $\nabla$ an \emph{$\A$-connection} if $\nabla$ is the Levi Civita connection on the interior $M_0$ which extends to a connection
defined on $\A$ as described in \cite{ALN2}. Likewise if $W$ is a $\Cl(\A)$-module we call $\nabla^W$ an \emph{admissible $\A$-connection} if
\[
\nabla_X^W(c(Y) \varphi) = c(\nabla_X Y) \varphi + c(Y)(\nabla_X^W \varphi), \ X, Y \in \Gamma(\A), \ \varphi \in \Gamma(W).
\]
Where Clifford multiplication is given by $c \colon \Cl(\A) \to \End(W)$ and $\nabla$ is the Levi-Civita $\A$-connection with respect to a given $\A$-metric $g$, see also \cite{LN}, \cite{ALN2}.
With an admissible $\A$-connection we can define a general Atiyah-Singer type geometric Dirac operator $D = D^W$, cf. \cite{LN}.
The \emph{vector representation} furnishes a corresponding $\G$-invariant geometric Dirac operator $\Dirac$ such that $\varrho(\Dirac) = D$, cf. \cite{LN}.
Here the vector representation is characterized by the equality: $(\varrho(P) f) \circ r = P(f \circ r)$, where $r$ is the range map of the groupoid (a surjective submersion),
$P \in \End(C^{\infty}(\G))$ and $f \in C^{\infty}(M)$, see also \cite{ALN}, \cite{NWX}.
Intuitively, we can think of the boundary strata of $M$ as being \emph{pushed to infinity}. This is the correct setup for APS-index theory as considered in \cite{M} for the special case of the maximal Lie structure $\V = \V_b$ and a manifold with boundary.

\subsection*{Local boundary conditions}

On the other hand we would like to take into account also \emph{local boundary conditions}, e.g. Dirichlet and Neumann condition in the setting of Lie manifolds.
To this end we need to modify slightly the definition of a Lie manifold and consider so-called Lie manifolds with boundary as introduced in \cite{AIN}. 
As before, a \emph{Lie manifold with boundary} consists of a manifold $M$ with corners, whose boundary
hyperfaces are $F_1, \dots, F_k$, i.e.\ $M=M_0 \cup F_1 \cup\cdots\cup F_k$. However, one boundary hyperface,
called the \emph{regular boundary}, say  $Y:= F_1$, has a special role, similar to a boundary component of a Riemannian manifold with boundary.
If we glue two copies of $M$ along $Y$, then we obtain the double $M \cup_{Y} M$. We do this doubling
such that the boundary of the double at the interior of $Y \cap F_i$, $i>1$ has no corner.
To keep the distinction in mind we use throughout the notation $\F_1(M)$ for the \emph{singular} boundary hyperfaces of $M$, as opposed to the regular boundary stratum $Y$.
Roughly speaking, a Lie structure with boundary on $M$ is defined as the restriction of a Lie structure on $M \cup_{Y} M$
to one of the copies of $M$, see \cite{AIN}.
for details. If $Y_0$ is the interior of $Y$, then the map $\varrho$ defines a bundle isomorphism from $\A|_{Y_0}$ to $TM|_{Y_0}$, and
$Y$ carries an induced Lie structure, denoted by $\B$. In the following
$\W:=\Gamma(\B)\subset \Gamma(TY)$. We want to consider geometric admissible Dirac operators with boundary conditions posed with regard to the hypersurface $Y$.
Here we also need to modify the definition of admissible Dirac operators for local boundary conditions. We do so in the main body of the paper.
In order to study general and mixed boundary conditions we modify also the definition of the Lie manifold with boundary further. We introduce the concept of a \emph{decomposed Lie manifold} which consists
of two parts $M = M_1 \cup_Y M_2$, glued at the regular hypersurface $Y$. 

\subsection{Organization of the paper}

In the second Section we recall the definition of a Lie manifold with boundary and study the properties of first order differential operators on Lie manifolds with boundary.
The third Section is concerned with decomposed Lie manifolds. We show that to any decomposed Lie manifolds there is a Lie groupoid integrating the Lie structure.
In the fourth section we define a Lie semi-groupoid and a convolution $C^{\ast}$-algebra for boundary value problems. The main result is 
the continuity of the field of $C^{\ast}$-algebras associated to the adiabatic semi-groupoid.
We show that there is a functional calculus taking values in the reduced $C^{\ast}$-algebra of the semi-groupoid.
In the fifth section we define the renormalized trace on Lie manifolds with boundary. Then we give a proof of the index theorem on
decomposed Lie manifolds with mixed boundary conditions which relies on a rescaling technique and the previously introduced functional calculus for the adiabatic semi-groupoid.
We also give the Fredholm conditions for Dirac operators and criteria for the equality of the renormalized index with the Fredholm index.

\section{Geometric Dirac operators}

\label{II}


First order differential operators on a manifold with a Lie structure at infinity (a Lie manifold) are operators
which are contained in the enveloping algebra generated by the Lie structure.
For a Lie manifold with boundary we discuss differential operators of first order which are symmetric to the boundary, a notion
we introduce in this section. These operators are studied in the setting of measured Lie manifolds, a slightly more general
context than oriented Lie manifolds. An essential feature of Lie manifolds with boundary is the existence of a tubular neighborhood.
Since the interior of any Lie manifold is in particular a manifold with bounded geometry by \cite{ALN2}, we can contrast this to the case of a 
manifold with bounded geometry and boundary. In the latter case the existence of a tubular neighborhood is part of the definition of 
a manifold with bounded geometry and boundary.
For the (slightly) more restrictive class of Lie manifolds with boundary the existence of a tubular neighborhood can be derived as a theorem, i.e. as a consequence of the definition.
This is an advantage of the category of Lie manifolds over other, slightly more general categories of non-compact manifolds. 
Later in this section we introduce geometric Dirac operators on spin Lie manifolds. It is for the latter class of Dirac
operators that we will pose additional boundary conditions and derive suitable index theorems.

\subsection*{Lie manifold with boundary}

We recall the definition of a Lie manifold with boundary from \cite{AIN}. 

\begin{Def}
A \emph{Lie manifold with boundary} is a Lie manifold $(M, \A, \V)$ together with a submanifold $(Y, \B, \W)$ such that the following
conditions hold:

\emph{1)} $(Y, \B, \W) \hookrightarrow (M, \A, \V)$ is a \emph{Lie submanifold}, i.e. $Y \subset M$ is a submanifold with corners where
$\B \to Y$ is a $C^{\infty}$-vector bundle such that $\Gamma(\B) \cong \W$ and $\B \hookrightarrow \A_{|Y}$ is a Lie subalgebroid.

\emph{2)} The submanifold $Y$ is \emph{transverse} in $M$, i.e. $T_p M = \mathrm{span} \{\varrho(\A_p), T_p Y\}$ for each $p \in \partial Y = Y \cap \partial M$. 

\label{Def:Liebdy}
\end{Def}

\begin{Rem}
\emph{i)} Condition \emph{2)} is equivalent to $T_x M = T_x Y + T_x F$ for each $x \in F \cap Y$ and each closed codimension one face $F \in \F_1(M)$. 
Notice that the Lie structure of vector fields $\W$ is - by definition of a Lie submanifold - a subalgebra of $\V_{|Y}$, precisely $\W = \{V_{|Y} : V \in \V, \ V_{|Y} \ \text{tangent to} \ Y\}$. 

\emph{ii)} Given a Lie manifold with boundary as above. Consider the exponential map $\exp \colon \A \to M$ which is the natural extension from the interior $\exp \colon TM_0 \to M_0$. 
Setting $\N := \frac{\A_{|Y}}{\B}$ the $\A$-normal bundle, where $\Gamma(\B) \cong \W$ and $\B \hookrightarrow \A_{|Y}$ is in particular a sub vector bundle.
We obtain the exact sequence of vector bundles
\[
\xymatrix{
\B \ar@{>->}[r] & \A_{|Y} \ar@{->>}[r]^{q} & \N.
}
\]

This sequence splits and denote by $\eta \colon \N \to \A_{|Y}$ the splitting. By a choice of an $\A$-metric, we can
choose $\eta$ as an isomorphism $\eta \colon \N \iso \B^{\perp}, q \circ \eta = \id_{\N}$. This furnishes the decomposition 
$\A_{|Y} \cong \B \oplus \N$. 

\emph{iii)} An application of the achnor $\varrho$ of the Lie algebroid $\A$ yields an isomorphism
\[
\frac{\A_p}{\B_p} = \N_p \iso \frac{T_p M}{T_p Y} \cong N_p Y, \ p \in Y. 
\]

Hence, in particular, $\N_{|Y_0} \cong N Y_0$ is an isomorphism over $Y_0$. 

Fix an $\A$-metric $g = g_{\A}$ on $M$ and define the orthogonal complement $\B^{\perp}$ using $g$. 
We consider the induced exponential map, by observing that $\varrho_{\B^{\perp}}$ is injective onto its image, i.e. we have
\[
\xymatrix{
\varrho_{|\B^{\perp}} \colon \B^{\perp} \ar@{>->>}[d] \ar@{>->}[r] & \varrho(\B^{\perp}) \subset TM \ar[d] \\
\N \ar[r] & NY.
}
\]

For each $p \in F$, for a hyperface $F$ in $M$, we have $\varrho(\A_p) \subset T_p F$. We set $\exp^{\nu} := \exp_{|\B^{\perp}} \colon \B^{\perp} \to M$,
for the normal exponential map, which is a well-defined local diffeomorphism by the previous discussion. 
\label{Rem:Liebdy}
\end{Rem}

\subsection*{Measured manifolds}

For a given $\A$-metric $g$ on a Lie manifold $(M, \A, \V)$ we can associate a volume form which we denote throughout by $\mu_g$.

\begin{Def}
A Lie manifold $(M, \A, \V)$ is called \emph{measured} if there is a nowhere-vanishing smooth one-density $\mu$ on $M$ such that there is an $\A$-metric $g$ on $M$ with
$\mu = \mu_g$. 
\label{Def:measured}
\end{Def}

\begin{Prop}
Let $(M, \A, \V)$ be a Lie manifold with boundary $(Y, \B, \W)$ such that $(M, \mu)$ is measured. Then there is an induced
one-density $\nu$ on $Y$ obtained from an induced $\B$-metric $g_{\partial}$ on $Y$ which turns $(Y, \nu)$ into a measured Lie manifold. 
\label{Prop:measured}
\end{Prop}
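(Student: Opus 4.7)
The plan is to build $g_\partial$ and $\nu$ directly from the ambient data and then verify the conditions of Definition 2.2. First, I would fix the $\A$-metric $g$ on $M$ with $\mu = \mu_g$, which exists by the hypothesis that $(M,\mu)$ is measured. Using the splitting of Remark 2.1.ii, I invoke the orthogonal decomposition $\A|_Y \cong \B \oplus \N$ with respect to $g$, and define $g_\partial := g|_\B$ to be the restriction of the fiber metric on $\A|_Y$ to the subbundle $\B$. Since $g$ is a smooth positive definite symmetric bilinear form on $\A$ and $\B \hookrightarrow \A|_Y$ is a smooth subbundle, $g_\partial$ is a smooth positive definite symmetric bilinear form on $\B$, hence a euclidean structure. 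On the interior $Y_0$, the anchor $\varrho$ restricts to an isomorphism $\B|_{Y_0} \cong TY_0$ (this is part of being a Lie submanifold together with $\A|_{M_0} \cong TM_0$), so $g_\partial$ corresponds under this isomorphism to an honest Riemannian metric on $Y_0$. This shows $g_\partial$ is a genuine $\B$-metric in the sense described in the introduction.

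Next, I define $\nu$ as the Riemannian volume density $\mu_{g_\partial}$ associated to $g_\partial$ on $Y_0$, and I need to verify that this extends to a nowhere-vanishing smooth one-density on all of $Y$. Since $g_\partial$ is nondegenerate as a fiber metric on $\B \to Y$, the associated density $|\det g_\partial|^{1/2}$ is a smooth nowhere-vanishing section of the density bundle $|\Lambda^{\top}\B^\ast| \to Y$. Pulling back along the anchor isomorphism on $Y_0$ gives exactly $\mu_{g_\partial}$ as a Riemannian volume form on $Y_0$, and the density bundle identification extends smoothly to all of $Y$ precisely because $(Y,\B,\W)$ is a Lie submanifold. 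This is the conceptual point: Lie-algebroid densities always extend smoothly to the boundary strata even though the induced Riemannian volume form on the interior typically blows up when computed with respect to the ambient $TY$ trivialization — the whole reason for introducing $\A$-metrics is exactly this.

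Finally, I need to recall or verify that $(Y, \B, \W)$ is itself a Lie manifold so that Definition 2.2 applies. The key items — $Y$ compact with corners, $\W = \Gamma(\B)$ finitely generated projective over $C^\infty(Y)$, $\W$ a Lie algebra of vector fields tangent to the boundary strata of $Y$, and $\B|_{Y_0} \cong TY_0$ — all follow from the definition of Lie submanifold (condition 1 of Definition 2.1) together with the transversality in condition 2, which ensures that the boundary strata of $Y$ arise exactly as the intersections $Y \cap F$ for $F \in \F_1(M)$ and that $\W$-vector fields, being restrictions of $\V$-vector fields tangent to $Y$, remain tangent to these strata. With all of these ingredients in place, $\nu = \mu_{g_\partial}$ witnesses $(Y,\nu)$ as a measured Lie manifold.

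The main obstacle I anticipate is the smooth extension of $\nu$ to all of $Y$ and, relatedly, the independence of the construction of the choice of $g$ representing $\mu$: two different $\A$-metrics producing the same ambient density $\mu$ need not produce the same $\nu$ a priori. I would address this either by fixing $g$ once and for all (so that $\nu$ depends on the chosen $g$, not just on $\mu$), or by noting that the induced class of densities on $Y$ is well-defined up to multiplication by a positive smooth function that is $1$ on $Y_0$, which is still enough to produce a measured Lie manifold structure in the sense of Definition 2.2.
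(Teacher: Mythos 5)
Your proposal is correct and follows essentially the same route as the paper: fix $g$ with $\mu = \mu_g$, use the orthogonal splitting $\A|_Y \cong \B \oplus \B^{\perp}$ to restrict $g$ to a $\B$-metric $g_{\partial}$, and take $\nu = \mu_{g_{\partial}}$. The paper's own proof is only a two-line sketch of exactly this construction (adding the remark that $\nu = \mu|_Y$, which also settles the dependence on the choice of $g$ that you raise at the end), so your version is simply a more detailed write-up of the same argument.
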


\begin{proof}
Let $g$ be an $\A$-metric on $M$ such that $\mu = \mu_g$. Denote by $\B^{\perp}$ the complement of $\B$ defined with the metric $g$. 
If $g_{\partial}$ is a metric obtained by restriction to $Y$, then $g_{\partial}$ is a $\B$-metric on $Y$ and $\mu_{g_{\partial}} = \nu$ yields a one-density such that $\nu = \mu_{|Y}$.  
\end{proof}

\begin{Rem}
If $(M, \A, \V)$ is orientable, then we can trivialize the top degree part of $\Lambda^{\bullet} \A^{\ast}$ in order to obtain a global density
$\mu = \mu_g$. Then $\mu$ yields a measured manifold $(M, \mu)$. In particular any spin Lie manifold is orientable, therefore measured.
\label{measured}
\end{Rem}

\subsection*{Global tubular neighborhood}



We introduce the notation $Y_{I} := I \times Y$ for an interval $I \subset \Rr$. Also write $Y_{(\epsilon)} := (-\epsilon, \epsilon) \times Y$. 
By a \emph{tubular neighborhood} of $Y$ in $M$ we mean a local diffeomorphism $\Psi \colon M \to M$ with an open neighborhood $Y \subset \U \subset M$ such that for a suitable $\epsilon > 0$ the restriction of $\Psi$ yields a 
diffeomorphism $\Psi \colon \U \iso Y_{(\epsilon)}$. 

\begin{Def}
Given a Lie manifold $(M, \A, \V)$ with boundary $(Y, \B, \W)$, a \emph{boundary defining function} $\rho_Y$ adapted to a tubular neighborhood $\Psi \colon \U \iso Y_{(\epsilon)}$ is an element of $C^{\infty}(M, \Rr)$ with
$\{\rho_Y = 0\} = Y$ and non-vanishing $d \rho_Y$ at $Y$ such that
\[
(\rho_Y \circ \Phi^{-1})(x', x_n) = x_n, \ x' \in Y, x_n \in (-\epsilon, \epsilon). 
\]
\label{Def:tubnbhd}
\end{Def}

\begin{Thm}
Let $(M, \A, \V)$ be a measured Lie manifold with boundary $(Y, \B, \W)$. There is an open neighborhood $Y \hookrightarrow \U \hookrightarrow M$
and a local diffeomorphism $\Phi \colon M \to M$ such that $\Phi \colon (u, \psi) \in \U \iso Y_{[0,r)}$ for some $r > 0$.
Additionally, $\Phi$ has the following properties: Denote by $\n$ a fixed normal vector field $\n \in \Gamma(\A_{|Y})$ and
by $\tau$ the accompaning $1$-form such that $\tau(\n) = 1$ and $\tau_{|Y} = 0$. 

\emph{i)} We have $Y = u^{-1}(0)$, where $u \equiv \rho_Y$ denotes the boundary defining function of $Y$. 

\emph{ii)} $\Phi_{|Y} = \id_Y$ and the following diagram commutes
\[
\xymatrix{
\U \ar[d] \ar@{>->>}[r]^{\Phi} & Y_{[0,r)} \ar[d]_{\mathrm{pr}_1} \\
Y \ar@{>->>}[r]^{\id = \Phi_{|Y}} & Y.
}
\]

\emph{iii)} $d \Phi(\n) = \partial_u$ along $Y$. 

\emph{iv)} $\tau(\n) = du$ along $Y$. 

\emph{v)} $\Phi_{\ast}(\mu) = |du| \otimes \nu$ on $Y_{[0,r)}$ where $\mu$ is the measure of $M$ and $\nu$ the induced measure on $Y$. 

\label{Thm:tubnbhd}
\end{Thm}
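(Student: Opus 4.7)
I would build $\Phi$ as the inverse of the normal $\A$-exponential map, and then correct it in the normal direction by a Moser-type rescaling to enforce the density identity (v).

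First I pass from $\A$ to its normal piece along $Y$. With the $\A$-metric fixed as in Remark~\ref{Rem:Liebdy}(iii), $\B^{\perp}\subset\A_{|Y}$ is a rank-one subbundle (since $Y$ is a hyperface, hence of codimension one) and the normal exponential
\[
\exp^{\nu}=\exp_{|\B^{\perp}}\colon \B^{\perp}\to M
\]
is a local diffeomorphism near the zero section. Fix the inward unit normal $\n\in\Gamma(\B^{\perp})$, which trivialises $\B^{\perp}\cong Y\times\Rr$ by $(\psi,t)\mapsto t\n(\psi)$. Since $Y$ is a closed face of the compact manifold $M$, and is therefore itself compact, the standard uniform-neighborhood argument produces $r>0$ for which $\exp^{\nu}$ restricts to a diffeomorphism $Y_{[0,r)}\iso \U$ onto a neighborhood of $Y$. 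I set $\Phi_0:=(\exp^{\nu})^{-1}\colon \U\iso Y_{[0,r)}$.

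Properties (i)--(iv) are then essentially formal. The coordinate $u:=\mathrm{pr}_{[0,r)}\circ\Phi_0$ is a boundary defining function in the sense of Definition~\ref{Def:tubnbhd}, giving (i). Since $\exp^{\nu}(0\cdot\n(\psi))=\psi$, one has $\Phi_0|_Y=\id_Y$, giving (ii). Differentiating the curve $t\mapsto\exp(t\n(\psi))$ at $t=0$ yields $d(\exp^{\nu})(\partial_t|_0)=\varrho(\n)$, hence $d\Phi_0(\n)=\partial_u$ along $Y$, which is (iii); dualising gives (iv), since $\tau(\n)=1$ and $\tau|_{\B}=0$ match $du(\partial_u)=1$ and $du|_{TY}=0$.

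The substantive step is (v). In the $(\psi,u)$-coordinates the push-forward $(\Phi_0)_{\ast}\mu$ has the form $J(\psi,u)\,|du|\otimes\nu$ for a smooth positive function $J$ with $J(\psi,0)=1$, the latter because at the zero section the differential of the exponential is the identity and $\nu=\mu_{g_\partial}$ by Proposition~\ref{Prop:measured}. I then correct the Jacobian by modifying only the normal coordinate: define
\[
f_{\psi}(s):=\int_0^s J(\psi,\sigma)\,d\sigma,
\]
a smooth diffeomorphism of a (possibly smaller) interval $[0,r')$ onto its image with $f_{\psi}(0)=0$ and $f_{\psi}'(0)=J(\psi,0)=1$. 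Put $\Phi:=\bigl((\psi,s)\mapsto(\psi,f_{\psi}(s))\bigr)\circ\Phi_0$. The conditions $f_{\psi}(0)=0$ and $f_{\psi}'(0)=1$ ensure that (i)--(iv) survive verbatim, while a direct change of variables turns $J(\psi,u)\,|du|\otimes\nu$ into $|du|\otimes\nu$ on $Y_{[0,r')}$, giving (v).

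The main obstacle is the smooth behaviour of $J$ up to and along the corner $\partial Y=Y\cap\partial M$, where other hyperfaces of $M$ meet $Y$ and the underlying Lie structure degenerates from the point of view of the usual tangent bundle. This smoothness is what allows the fibrewise rescaling $f_{\psi}$ to glue into a global diffeomorphism. Here I would invoke that $\exp^{\nu}$ comes from the $\A$-exponential and is therefore smooth on the manifold-with-corners $\B^{\perp}$ (not merely on its interior), and that the transversality condition of Definition~\ref{Def:Liebdy}(2) keeps $\B^{\perp}$ a genuine rank-one subbundle up to $\partial Y$, so that $J>0$ there. Together these facts make the Moser-type rescaling well defined on the full tubular neighborhood, including its corners, so $\Phi$ extends as a local diffeomorphism of $M$ with the asserted properties.
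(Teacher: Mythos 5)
Your construction is correct in outline, but it takes a genuinely different route from the paper: the paper's entire proof is the one line ``combine Proposition~\ref{Prop:measured} with the tubular neighborhood theorem for Lie manifolds with boundary in [AIN, Theorem~2.7]'', i.e.\ it outsources both the existence of the collar and its compatibility with the Lie structure to the cited result, and only adds the induced measure on $Y$ from Proposition~\ref{Prop:measured}. You instead rebuild the collar by hand from the normal exponential $\exp^{\nu}\colon\B^{\perp}\to M$ of Remark~\ref{Rem:Liebdy} and then impose property (v) by a fibrewise Moser rescaling $f_{\psi}(s)=\int_0^s J(\psi,\sigma)\,d\sigma$; the Jacobian computation $F_{\ast}\bigl(J\,|ds|\otimes\nu\bigr)=\bigl(J/f_{\psi}'\bigr)\,|du|\otimes\nu=|du|\otimes\nu$ is right, and $J(\psi,0)=1$ does follow from Proposition~\ref{Prop:measured} together with $d(\exp^{\nu})=\mathrm{id}$ on the zero section. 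Two small points to tighten: first, after rescaling the image $f_{\psi}([0,r'))$ depends on $\psi$, so you must shrink once more (using $\inf J>0$ on the compact collar) to land in a fixed $Y_{[0,r'')}$; second, the ``main obstacle'' you flag at the end --- smoothness and non-degeneracy of $\exp^{\nu}$ and of $J$ up to the corners $Y\cap\partial M$, and a uniform injectivity radius there --- is precisely the content of [AIN, Theorem~2.7], so a complete write-up would still have to invoke that theorem (or reprove it); at that point your argument becomes the paper's proof plus an explicit Moser normalisation of the density, which is a worthwhile addition since the paper leaves it implicit how the product form of $\mu$ on all of $Y_{[0,r)}$, rather than merely along $Y$, is achieved.
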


\begin{proof}
Combine Proposition \ref{Prop:measured} with the tubular neighborhood theorem for Lie manifolds with boundary in \cite[Theorem 2.7]{AIN}.
\end{proof}


\subsection*{Model operators}

We consider first order differential operators on a Lie manifold with boundary which are symmetric to the boundary. 
In the following we fix a Lie manifold $(M, \A, \V)$ with boundary $(Y, \B, \W)$ which is measured and we fix the volume element $\mu$. 
We also fix hermitian smooth vector bundles $E, F \to M$ and the normal vector field $\n$ with $1$-form $\tau$ as well as a tubular neighborhood $\Phi \colon \U \to Y_{[0,r)}$ for some $r > 0$ from Theorem \ref{Thm:tubnbhd}.

\begin{Def}
Let $D \in \Diff_{\V}^1(M, E, F)$, then $D$ is called \emph{boundary symmetric} (with regard to $\n$) if $D$ is elliptic and setting
$\sigma_0 := \sigma_D(\tau)$ we have that $\sigma_0(x)^{-1} \circ \sigma_D(\xi) \colon E_x \to E_x$, $\sigma_D(\xi) \circ \sigma_0(x)^{-1} \colon F_x \to F_x$ are skew Hermitian
for each $x \in Y, \ \xi \in \B_x^{\ast}$. 
\label{Def:bdysym}
\end{Def}

\begin{Prop}
\emph{i)} The operator $D$ is boundary symmetric if and only if the formal adjoint $D^{\ast}$ is boundary symmetric.

\emph{ii)} Boundary symmetry is independent of the choice of volume form $\mu$. 

\label{Prop:bdysym}
\end{Prop}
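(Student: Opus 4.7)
The plan is to pass both assertions through the principal symbol, since boundary symmetry is defined purely in terms of symbol data. The only fact I will invoke from outside the excerpt is the standard identity for first order differential operators on a measured manifold, namely that the principal symbol of the formal adjoint is the fiberwise adjoint of the principal symbol, $\sigma_{D^*}(\xi) = \sigma_D(\xi)^*$ (where $*$ is taken with respect to the Hermitian structures on $E, F$). This is independent of the volume form $\mu$, because two choices of $\mu$ differ by multiplication by a positive smooth function and therefore produce formal adjoints that differ by an operator of order zero, which does not affect the principal symbol. I would record this symbol identity at the beginning, as it serves both parts.

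For (i), since $\sigma_{D^*}(\xi) = \sigma_D(\xi)^*$, writing $\sigma_0 = \sigma_D(\tau)$ the condition that $D^*$ be boundary symmetric becomes that $(\sigma_0^*)^{-1} \sigma_D(\xi)^*$ is skew Hermitian on $F_x$ and $\sigma_D(\xi)^* (\sigma_0^*)^{-1}$ is skew Hermitian on $E_x$. I would then note the elementary identities
\[
(\sigma_0^*)^{-1} \sigma_D(\xi)^* = \bigl(\sigma_D(\xi) \sigma_0^{-1}\bigr)^*, \qquad \sigma_D(\xi)^* (\sigma_0^*)^{-1} = \bigl(\sigma_0^{-1} \sigma_D(\xi)\bigr)^*,
\]
together with the trivial remark that an endomorphism $A$ is skew Hermitian if and only if $A^*$ is. This immediately identifies the b.s.\ conditions for $D^*$ with the fiberwise adjoints of the b.s.\ conditions for $D$, yielding the equivalence. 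Ellipticity is symmetric in $D$ and $D^*$ for the same reason ($\sigma_D(\xi)$ is invertible iff $\sigma_D(\xi)^*$ is), so the full definition transfers.

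For (ii), the observation is that the defining conditions of Definition \ref{Def:bdysym} involve only the principal symbol $\sigma_D$, the Hermitian structures on $E$ and $F$, the subbundle $\B \hookrightarrow \A|_Y$, and the normal vector field $\n$ together with its dual 1-form $\tau$; none of these data depend on the measure $\mu$. Hence boundary symmetry is manifestly a $\mu$-independent notion, proving (ii). (The formal adjoint $D^*$ itself does depend on $\mu$, but by the symbol computation recalled above this dependence drops out at the principal-symbol level, so (i) combined with (ii) is internally consistent.)

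The main obstacle, such as it is, is bookkeeping: one must be careful about the directions of the various maps $\sigma_D(\xi)\colon E_x \to F_x$, $\sigma_0^{-1}\colon F_x \to E_x$, and their adjoints, and verify that the skew-Hermitian conditions for $D^*$ land on the correct fibers $E_x, F_x$ after the adjoints have been taken. Once this is tracked, the proof reduces to the two purely formal identities displayed above.
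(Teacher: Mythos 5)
Your proposal is correct; note that the paper actually states Proposition \ref{Prop:bdysym} without any proof, and your argument via the symbol identity is exactly the computation the paper itself carries out later inside the proof of Lemma \ref{Lem:bdysym}, where $(\sigma_{D^{\ast}}(\tau))^{-1}\sigma_{D^{\ast}}(\xi)=(\sigma_D(\xi)\circ\sigma_0(x)^{-1})^{\ast}$ is derived. The only discrepancy is the sign convention: the paper uses $\sigma_{D^{\ast}}(\xi)=-\sigma_D(\xi)^{\ast}$ while you drop the minus sign, but since the sign cancels between $\sigma_0^{\ast}$ and $\sigma_D(\xi)^{\ast}$ (and skew-Hermiticity is preserved under $A\mapsto -A$ in any case), this does not affect the argument.
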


As a preparation for the analysis of the index problem for Dirac operators we extend next a result of \cite{BB} to the context of Lie manifolds with boundary.

\begin{Lem}
Let $D \in \Diff_{\V}^1(M; E, F)$ be elliptic and boundary symmetric. Fix the notation $\sigma_{(u,x)} \colon E_x \to F_x, \ (u,x) \in Y_{[0,r)}$,
$\sigma := \sigma_D(du), \ \sigma_{(0,x)} = \sigma_D(\tau(x)), \ x \in Y$.  
Then there are elliptic $\W$-differential operators 
$A \colon C_c^{\infty}(Y_0, E_{|Y}) \to C_c^{\infty}(Y_0, E_{|Y})$, $\tilde{A} \colon C_c^{\infty}(Y_0, F_{|Y}) \to C_c^{\infty}(Y_0, F_{|Y})$
such that 
\begin{align}
D &= \sigma_t(\partial_t + A + R_t), \label{n1} \\
D^{\ast} &= -\sigma_t^{\ast}(\partial_t + \tilde{A}_t + \tilde{R}_t), \label{n2}
\end{align}

where $R_t \colon C_c^{\infty}(Y_0, E_{|Y}) \to C_c^{\infty}(Y_0, E_{|Y})$, $\tilde{R}_t \colon C_c^{\infty}(Y_0, F_{|Y}) \to C_c^{\infty}(Y_0, F_{|Y})$
are operators contained in $\Diff_{\W}^l(Y; E)$ and $\Diff_{\W}^l(Y; F)$ respectively, where $l \leq 1, \ t \in [0,r)$.
Additionally, $R_t, \ \tilde{R}_t$ fufill the following estimates for $f \in C_c^{\infty}(Y_0, E_{|Y}), \ g \in C_c^{\infty}(Y_0, F_{|Y})$
\begin{align}
& \|R_t f\|_{L_{\W}^2(Y)} \leq C(t \|A f\|_{L_{\W}^2(Y)} + \|f\|_{L_{\W}^2(Y)}), \label{Ra} \\
& \|\tilde{R}_t g\|_{L_{\W}^2(Y)} \leq C(t \|\tilde{A} g\|_{L_{\W}^2(Y)} + \|g\|_{L_{\W}^2(Y)}). \label{Rb}
\end{align}
\label{Lem:bdysym}
\end{Lem}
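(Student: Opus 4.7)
\emph{Step 1: Local decomposition.} The plan is to work entirely inside the collar $\Phi \colon \U \iso Y_{[0,r)}$ from Theorem \ref{Thm:tubnbhd}, using the coordinate $t=u$ so that $\n$ becomes $\partial_t$ and $\mu$ becomes $|dt|\otimes \nu$. In these coordinates $\V|_{\U}$ is generated as a $C^{\infty}$-module by $\partial_t$ together with the horizontal lifts of the vector fields in $\W$, which forces the first order operator $D$ to decompose as
\[
D \;=\; \sigma_t\, \partial_t + B_t, \qquad \sigma_t := \sigma_D(du)\bigl|_{(t,\cdot)},
\]
where $B_t$ is a smooth $t$-family in $\Diff^1_{\W}(Y;E,F)$. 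Ellipticity of $D$ makes $\sigma_t\colon E\to F$ fiber-wise invertible throughout the collar, so I would factor $D=\sigma_t(\partial_t + A_t)$ with $A_t := \sigma_t^{-1} B_t \in \Diff^1_{\W}(Y;E)$; setting $A := A_0$ and $R_t := A_t - A$ gives \eqref{n1}. The operator $A$ is $\W$-elliptic because its principal symbol at $\xi\in \B^{\ast}$ is $\sigma_0^{-1}\sigma_D(\xi)$, which is invertible by the ellipticity of $D$.

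\emph{Step 2: Adjoint side.} For \eqref{n2} I would take formal adjoints with respect to the $t$-independent volume $\mu=|dt|\otimes\nu$ granted by Theorem \ref{Thm:tubnbhd}(v). Integration by parts in $t$ yields
\[
D^{\ast} \;=\; -\sigma_t^{\ast}\, \partial_t - (\partial_t \sigma_t^{\ast}) + B_t^{\ast} \;=\; -\sigma_t^{\ast}\bigl(\partial_t + \tilde{A}_t\bigr),
\]
with $\tilde{A}_t := (\sigma_t^{\ast})^{-1}\bigl[(\partial_t \sigma_t^{\ast}) - B_t^{\ast}\bigr] \in \Diff^1_{\W}(Y;F)$. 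Defining $\tilde{A} := \tilde{A}_0$ and $\tilde{R}_t := \tilde{A}_t - \tilde{A}$ yields \eqref{n2}; the $\W$-ellipticity of $\tilde{A}$ follows from the same symbol calculation applied to $D^{\ast}$, which is again elliptic and boundary symmetric by Proposition \ref{Prop:bdysym}(i).

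\emph{Step 3: Remainder estimates.} For \eqref{Ra} I would apply Taylor's theorem with integral remainder to the smooth family $t\mapsto A_t$,
\[
R_t \;=\; A_t - A_0 \;=\; t\int_0^1 (\partial_s A_s)\bigl|_{s=\lambda t}\, d\lambda \;=\; t\, C_t,
\]
producing a smooth family $(C_t)$ of $\W$-differential operators on $Y$ of order at most $1$. The standard first order elliptic inequality for the $\W$-elliptic operator $A$ on the Lie manifold $Y$, i.e.\ the norm equivalence $\|\cdot\|_{H^1_{\W}(Y)} \asymp \|\cdot\|_{L^2_{\W}(Y)} + \|A\cdot\|_{L^2_{\W}(Y)}$, then gives
\[
\|C_t f\|_{L^2_{\W}(Y)} \;\leq\; C_1\bigl(\|A f\|_{L^2_{\W}(Y)} + \|f\|_{L^2_{\W}(Y)}\bigr)
\]
uniformly for $t$ in a compact subinterval of $[0,r)$, and multiplying by $t$ yields \eqref{Ra}. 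The estimate \eqref{Rb} follows by the identical argument applied to $\tilde{R}_t$ and $\tilde{A}$.

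\emph{Main obstacle.} The only nonformal ingredient is the last elliptic inequality: one needs a genuine elliptic regularity result in the $\W$-pseudo-differential calculus on the Lie manifold $Y$ guaranteeing that a first order $\W$-elliptic operator controls the full $H^1_{\W}(Y)$-norm. Steps~1 and~2 are otherwise pure collar-geometry manipulations once Theorem \ref{Thm:tubnbhd} and the boundary-symmetry hypothesis on $D$ are in place.
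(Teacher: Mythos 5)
Your proposal follows essentially the same route as the paper: decompose $D$ in the collar from Theorem \ref{Thm:tubnbhd} as $\sigma_t(\partial_t + \D_t)$ with $\D_t$ a smooth family in $\Diff^1_{\W}(Y;E)$, peel off a $t$-independent elliptic operator $A$, put the difference into $R_t$, and combine a Taylor expansion in $t$ with the first-order elliptic estimate $\|f\|_{H^1_{\W}(Y)} \leq C(\|f\|_{L^2_{\W}(Y)} + \|Af\|_{L^2_{\W}(Y)})$ from \cite{AIN} to get \eqref{Ra}; you correctly isolate that estimate as the only nonformal ingredient.

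The one substantive divergence is your choice $A := \D_0$, which makes $R_0 = 0$ but leaves $A$ with no reason to be formally selfadjoint, and as a consequence your argument never actually uses the boundary-symmetry hypothesis (you only invoke ellipticity of $D$ to get invertibility of $\sigma_0^{-1}\sigma_D(\xi)$). The paper instead takes $A$ to be a \emph{formally selfadjoint} operator whose principal symbol is $\sigma_0(x)^{-1}\sigma_D(\xi)$ -- this is exactly where boundary symmetry enters, since skew-Hermitian principal symbol is the obstruction-free condition for such a selfadjoint $A$ to exist -- and absorbs the resulting zeroth-order discrepancy $\D_0 - A$ into $R_0$, which is why the paper only asserts that $R_0$ has order zero rather than vanishes. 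For the lemma as literally stated both choices work, and your Taylor-remainder estimate is if anything cleaner; but the selfadjointness of $A$ is what the hypothesis is for and what the later applications (the model form \eqref{normal} and the spectral boundary conditions) rely on, so you should either build it in as the paper does or note explicitly that you are proving a weaker normal form.
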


\begin{proof}
With $x \in Y$ we can write $\B_x^{\ast} = \{\xi \in \A_x^{\ast} : \xi(\n) = 0\}$.
If there is an $A$ such that \eqref{n1}, \eqref{n2} hold then observe that $\sigma_A(\xi) = \sigma_0(x)^{-1} \circ \sigma_D(\xi)$
using that $\sigma_0(x)^{-1} \circ \sigma_D(\xi)$ is skew Hermitian for $x \in Y, \ \xi \in \B_x^{\ast}$. 
The task is to find $A$ formally selfadjoint with principal symbol $\sigma_A$. By definition $\sigma_A$ is composed of invertible symbols,
hence such an $A$ is elliptic over $Y_{[0,r)}$. Setting $D = \sigma_t(\partial_t + \D_t)$ where $\D_t \in \Diff_{\W}^1(Y; E, E)$
is a family of elliptic operators with smooth coefficients, $t \in [0,r)$. 
Set $R_t := \D_t - A$, note that $\sigma(\D_0) = \sigma(A)$ and $R_0$ is a zero order operator. Therefore by a Taylor expansion we have the estimate
\[
\|R_t f\|_{L_{\W}^2(Y)} \leq C'(t \|f\|_{H_{\W}^1(Y)} + \|f\|_{L_{\W}^2(Y)}). 
\]

Since $R_t \colon \H_{\W}^1(Y) \to \H_{\W}^0(Y) = L_{\W}^2(Y)$ is bounded and $A$ is elliptic of first order we obtain by \cite{AIN}
\[
\|f\|_1 \leq C(\|f\|_0 + \|A f\|_0)
\]

which gives \eqref{Ra}. Via $\sigma_{D^{\ast}}(\xi) = -\sigma_D(\xi)^{\ast}$ we obtain
\begin{align*}
\sigma_{\tilde{A}}(\xi) &= \sigma_{D^{\ast}}(\tau(x))^{-1} \sigma_{D^{\ast}}(\xi) 
= (\sigma_D(\tau(x))^{\ast})^{-1} \circ \sigma_D(\xi)^{\ast} \\
&= (\sigma_D(x)^{\ast})^{-1} \circ \sigma_D(\xi) 
= (\sigma_D(\xi) \circ \sigma_0(x)^{-1})^{\ast}. 
\end{align*}

\end{proof}


\subsection*{Twisted Dirac operators}

Fix a spin Lie manifold $(M, \A, \V)$, an $\A$-metric $g$ and an $\A$-connection $\nabla^W$. 
We recall the definition of the geometric twisted Dirac operators on Lie manifolds, cf. \cite{ALN2}. 

Fix a Clifford module $W \in \Cl(\A)$ such that $W = W^{+} \oplus W^{-}$ is $\Zz_2$-graded compatible with the Clifford action: $\cliff(\Cl(\A)^{+}) W^{\pm} \subset W^{\pm}, \ \cliff(\Cl(\A)^{-}) W^{\pm} \subseteq W^{\mp}$. 

\begin{Def}
The geometric Dirac operator $D = D^W$ on the Lie manifold is defined as $D = c \circ (\id \otimes \sharp) \circ \nabla^W$, where $\sharp$ is the isomorphism
$\A \cong \A^{\ast}$ induced by the fixed compatible metric $g$, $\nabla^W$ denotes the $\A$-connection and $c$ the Clifford multiplication:
\[
\xymatrix{
\Gamma(W) \ar[r]^{\nabla^W} & \Gamma(W \otimes \A^{\ast}) \ar[r]^{\id \otimes \sharp} & \Gamma(W \otimes \A) \ar[r]^{c} & \Gamma(W).
}
\]
\label{Def:Dirac}
\end{Def}

Since $c$ is a $\V$-operator of order $0$ and $\nabla^{W}$ is a $\V$-operator of order $1$ we see that $D$ is in $\Diff_{\V}^1(M; W)$.
Additionally, $\sigma_1(D) \xi = i c(\xi) \in \End(W)$, hence invertible for $\xi \not= 0$, and $D$ is elliptic.\medskip


We check that a geometric Dirac operator on a Lie manifold with boundary is boundary symmetric. 

\begin{Prop}
Let $(M, \A, \V)$ be a spin Lie manifold with boundary $(Y, \B, \W)$. Then $(M, \A, \V)$ is measured. 
A geometric Dirac operator $D = D^W$ acting on the $C^{\infty}$ vector bundles $E, F \to M$ for a given Clifford module $W$ is boundary symmetric with regard to $Y$ for a fixed choice
of normal vector field $\n \in \Gamma(\A_{|Y})$. 
\label{Prop:Dirac}
\end{Prop}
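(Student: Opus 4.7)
The plan is to dispatch the two assertions of the proposition separately, the first being essentially a matter of definitions and the second a direct symbol calculation.

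For the claim that a spin Lie manifold is measured: a spin structure on $\A$ provides in particular an orientation of $\A$, so $(M, \A, \V)$ is orientable. By Remark \ref{measured}, any orientable Lie manifold carries a nowhere vanishing smooth density obtained by trivialising the top exterior power of $\A^\ast$, which coincides with $\mu_g$ for any chosen $\A$-metric. Thus $(M,\A,\V)$ is measured in the sense of Definition \ref{Def:measured}.

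For boundary symmetry, I would first unpack the definition. The principal symbol of the geometric Dirac operator is $\sigma_D(\xi) = i c(\xi^\sharp)$, where $\sharp \colon \A^\ast \iso \A$ is the musical isomorphism induced by $g$. Using the $\A$-metric and normalising so that $|\n|_g = 1$, the $1$-form $\tau$ dual to $\n$ satisfies $\tau^\sharp = \n$, and the description $\B_x^\ast = \{\xi \in \A_x^\ast : \xi(\n)=0\}$ translates under $\sharp$ into $\{\xi^\sharp : \xi \in \B_x^\ast\} = \B_x$, i.e.\ the Clifford elements arising from cotangent vectors in $\B_x^\ast$ are exactly Clifford multiplication by vectors of $\B_x$, which are orthogonal to $\n$.

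The core computation is then to verify that, for $x \in Y$ and $\xi \in \B_x^\ast$, the endomorphisms $\sigma_0(x)^{-1} \circ \sigma_D(\xi)$ and $\sigma_D(\xi) \circ \sigma_0(x)^{-1}$ are skew Hermitian. I would use two facts: (a) since $\n$ has unit length, the Clifford relation gives $c(\n)^2 = -1$, hence $\sigma_0(x)^{-1} = (ic(\n))^{-1} = -c(\n)/i$; (b) since $\xi^\sharp \in \B_x$ is orthogonal to $\n$, the Clifford relation gives $c(\n) c(\xi^\sharp) + c(\xi^\sharp)c(\n) = 0$. Combining these,
\[
\sigma_0(x)^{-1}\sigma_D(\xi) = -c(\n)c(\xi^\sharp) = c(\xi^\sharp) c(\n).
\]
Because $W$ is a Hermitian Clifford module compatible with the admissible $\A$-connection, $c(v)^\ast = -c(v)$ for every $v \in \A$. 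Hence
\[
\bigl(c(\xi^\sharp) c(\n)\bigr)^\ast = c(\n)^\ast c(\xi^\sharp)^\ast = c(\n)c(\xi^\sharp) = -c(\xi^\sharp)c(\n),
\]
showing the first composition is skew Hermitian; the computation for $\sigma_D(\xi) \circ \sigma_0(x)^{-1}$ is symmetric. Ellipticity is already observed after Definition \ref{Def:Dirac}, completing the verification.

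The only mildly delicate point is the normalisation: the normal vector field $\n$ fixed in Theorem \ref{Thm:tubnbhd} need not be unit a priori, but up to replacing $\n$ by $\n/|\n|_g$ (and rescaling $\tau$ accordingly) one may assume it is, and the definition of boundary symmetric is stable under such rescaling. Modulo this, everything reduces to the standard Clifford anticommutation relations.
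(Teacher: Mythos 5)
Your proposal is correct and follows essentially the same route as the paper: the measured claim is reduced to orientability via Remark \ref{measured}, and boundary symmetry is verified by the standard Clifford anticommutation relations at the symbol level (the paper phrases these as the polarized relations $2g_{\A}(\xi,\eta)\id = \sigma_D(\xi)^{\ast}\sigma_D(\eta)+\sigma_D(\eta)^{\ast}\sigma_D(\xi)$ with $\eta=\tau$ and $g_{\A}(\tau,\xi)=0$, while you work directly with $c(v)^{\ast}=-c(v)$ and $c(\n)c(\xi^{\sharp})=-c(\xi^{\sharp})c(\n)$, which is the same computation). Your explicit remark about normalising $\n$ to unit length is a small point the paper leaves implicit.
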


\begin{proof}
Let $g = g_{\A}$ be an $\A$-metric. The Clifford relations are
\begin{align*}
2 g_{\A}(\xi, \eta) \id_{E_x} &= \sigma_D(\xi)^{\ast} \sigma_D(\eta) + \sigma_D(\eta)^{\ast} \sigma_D(\xi), \ \xi, \eta \in \A_x^{\ast}, \\
2 g_{\A}(\xi, \eta) \id_{F_x} &= \sigma_D(\xi) \sigma_D(\eta)^{\ast} + \sigma_D(\xi)^{\ast} \sigma_D(\eta), \ \xi, \eta \in \A_x^{\ast}. 
\end{align*}

Set $\sigma_0(x) = \sigma_D(\tau(x))$ where $\tau$ denotes the $1$-form associated to the normal vector $\n$, i.e. $g_{\A}(\tau(x), \xi) = 0$. 
Extend $\xi$ to $\A_x$ by setting $\xi(\n) = 0$. Then the Clifford relations yield $\sigma_0(x)^{-1} \circ \sigma_D(\xi) \colon E_x \to E_x, \ \sigma_D(\xi) \circ \sigma_0(x)^{-1} \colon F_x \to F_x$ are skew-hermitian
for each $x \in Y, \ \xi \in \B_x^{\ast}$. 
\end{proof}

\section{A glueing construction}

\label{III}

We introduce so-called decomposed Lie manifolds. These Lie manifolds consist of two parts, one part is a so-called Lie manifold
of cylinder type and the complementary part is a standard Lie manifold in its own right. Furthermore, we show that for the corresponding Lie algebroid
on a decomposed Lie manifold, we can obtain an integrating groupoid via a glueing of two groupoids, one on the cylinder part
of the manifold and the other on the complement. We show that the resulting glued groupoid can be endowed with a smooth structure in a natural way.

\begin{Def}
A \emph{decomposed Lie manifold} $(M, \A, \V)$ with hypersurface $(Y, \B, \W)$ is a Lie manifold such that $M = M_1 \cup M_2$ where $M_1 \cap M_2 = Y$.
Additionally, $M_2$ is a Lie manifold of \emph{cylinder type}, i.e. $M_2$ is diffeomorphic to a global tubular neighborhood of $Y$ in $M$. 
\label{Def:decomposed}
\end{Def}

Any Lie manifold with boundary can be glued to a decomposed Lie manifold, up to a choice of tubular neighborhood, which follows by Theorem \ref{Thm:tubnbhd} and the following discussion.
Let $(M, \A, \V)$ be a decomposed Lie manifold such that $M_1 \cup M_2 = M, \ M_1 \cap M_2 = Y$. 
In the following we consider two groupoids: $\G_1 \rightrightarrows \mathring{M}_1$ where $\mathring{M}_1 := M_1 \setminus Y$ as well as
$\G_2 \rightrightarrows Y \times [-1,0]$. Here $M_2 \cong Y \times [-1,0]$ is the cylinder-type part of $M$. 
Let $\U$ be a global tubular neighborhood of $Y$ in $M$ such that $\U = \U_{+} \cup_Y \U_{-}$ is decomposed in the following sense. 
Fix a boundary defining function, i.e. smooth function $\rho \colon M \to \Rr$ such that $Y = \{\rho = 0\}$ and $d \rho$ is non-vanishing on $Y$.
We also consider the strata $Y_{+} := \{\rho = 1\}$ and $Y_{-} := \{\rho = -1\}$.
Let $\U$ be decomposed into the collars $\U_{+} \cong (0,1]_u \times Y_{+}$ and $\U_{-} \cong [-1,0)_u \times Y_{-}$. 
The groupoids $\G_1, \ \G_2$ are adapted to $(M, \A, \V)$ and $(Y, \B, \W)$. Let $\V_{\mathring{M}_1}$ be the Lie structure of $\mathring{M}_1$. 
Denote by $\G_1 = \G(\mathring{M}_1) \rightrightarrows \mathring{M}_1$ the Lie groupoid integrating the Lie structure $\V_{\mathring{M}_1}$. 
Let $\H \rightrightarrows Y$ be the Lie groupoid integrating the Lie structure $\W$. 
Define $\G_2 := \H \times ([-1,0)^2 \cup \{0\} \times \Rr_{+}) \rightrightarrows Y \times [-1,0]$. 
The groupoid structure of $\G_2$ is given by the pair groupoid structure on $[-1,0)^2$ and $(\Rr_{+}, \cdot)$ viewed as a multiplicative group. 

We are now in a position to state the following result.

\begin{Thm}
The groupoid $\G := \G_1 \cup \G_2 \rightrightarrows M$ has the $C^{\infty}$-structure of a Lie groupoid such that $\A(\G) \cong \A$, i.e. $\G$ integrates
the Lie structure of the decomposed Lie manifold $(M, \A, \V)$. 
\label{Thm:decomposed}
\end{Thm}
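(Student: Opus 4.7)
The plan is to exhibit a smooth Lie groupoid structure on $\G = \G_1 \cup \G_2$ by providing an explicit local model at the gluing hypersurface $Y$ and then verifying that this model matches $\G_1$ and $\G_2$ on their respective sides. Away from $Y$, the statement is immediate: $\G$ is either $\G_1$ over $\mathring{M}_1$ or $\G_2$ over $Y \times [-1, 0)$, both Lie groupoids by construction. The crux is therefore to show that the two pieces fit together smoothly across $Y$, and then to identify $\A(\G) \cong \A$.

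First I would fix, via Theorem \ref{Thm:tubnbhd}, a one-sided tubular neighborhood $\U_+ \cong Y \times [0, \epsilon)$ of $Y$ inside $M_1$ with boundary defining function $u$. Together with $M_2 \cong Y \times [-1, 0]$ this provides a two-sided collar $\U \cong Y \times (-1, \epsilon) \subset M$. By the form of the Lie structure on a decomposed Lie manifold near $Y$, reflected in the construction of $\G_2$, the algebroid $\A|_{\U}$ is isomorphic to $\B \oplus \langle u \partial_u \rangle$. The candidate integrating groupoid on $\U$ is $\H \times \G^b$, where $\G^b$ is the b-groupoid on $(-1, \epsilon)$ with b-behavior at $u = 0$, explicitly
\[
\G^b \;=\; \bigl\{(u, v) \in (-1, \epsilon)^2 : uv > 0\bigr\} \;\cup\; \{0\} \times \Rr_+,
\]
with $s(u,v) = u$, $t(u,v) = v$ on the first piece and $s(0,\lambda) = t(0,\lambda) = 0$ on the isotropy. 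Its smooth structure is the standard one obtained by the projective change of coordinates $(u, v) \mapsto (u, \lambda = v/u)$ on each open quadrant $\{u,v > 0\}$ and $\{u,v < 0\}$; in these coordinates the space becomes the smooth $(-1, \epsilon) \times \Rr_+$, and the multiplication on the pair-groupoid pieces extends to the multiplicative group law on $\Rr_+$ at $u = 0$.

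Second, I would match $\H \times \G^b$ with $\G_1$ and $\G_2$ on their respective parts of the collar. On the positive side $u > 0$, the restriction of $\H \times \G^b$ is the pair-groupoid product $\H \times (0, \epsilon)^2$, which coincides with $\G_1|_{\U_+ \setminus Y}$ since $\V_{\mathring{M}_1}$ restricted to the collar is generated by lifts of $\W$ together with $u \partial_u$. On the side $u \leq 0$, the restriction is exactly $\G_2|_{\U}$ by definition of $\G_2$. Therefore the smooth structures glue to a single smooth structure on $\G$, and smoothness of source, range, unit, inverse and multiplication on $\G$ follows piecewise. The algebroid identification $\A(\G) \cong \A$ is then local and immediate: on $\mathring{M}_1$ by construction of $\G_1$, on $M_2$ by inspection of $\G_2$, and on $Y$ both descriptions restrict to $\B|_Y \oplus \Rr\langle \partial_u \rangle$, patching into a global isomorphism.

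The hard part is justifying the smoothness of the multiplication in $\G^b$ across the $\Rr_+$-isotropy over $\{u = 0\}$. This is the classical local fact underlying Melrose's b-calculus, see \cite{M}, namely that the b-stretched product of an interval with itself is smooth, with the groupoid multiplication extending smoothly across the front face via the projective blow-up coordinates; together with the compatibility of the $\Rr_+$-group law with the limits of the pair-groupoid multiplication from each side. Once this local computation is accepted, the rest of the argument is formal patching.
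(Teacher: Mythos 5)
Your argument is correct and takes essentially the same route as the paper: the paper likewise reduces to a local model of the form $\H \times (\text{one-dimensional groupoid with } \Rr_+ \text{ isotropy over } u=0)$ on a two-sided collar of $Y$, and transports the smooth structure by gluing the identity on the $\G_2$ side to the canonical identification $(\G_1)_{\U}^{\U} \cong \H \times (0,1)^2$ on the other. The only cosmetic difference is that the paper realizes your groupoid $\G^b$ as the restriction of the action groupoid $\Rr \rtimes \Rr_+ \rightrightarrows \Rr$, so smoothness of the multiplication across $\{u=0\}$ is automatic rather than quoted from the b-calculus; your projective coordinates $(u,v) \mapsto (u, v/u)$ are precisely that identification.
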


\begin{proof}
Define the auxiliary groupoid $\tilde{\H} := \H \times \Rr \rtimes \Rr_{+} \rightrightarrows Y \times \Rr$, where $\Rr \rtimes \Rr_{+} \rightrightarrows \Rr$
is the semi-direct product groupoid given by the multiplicative action of $(\Rr_{+}, \cdot)$ on $(\Rr, \cdot)$. 
It is immediate that $\tilde{\H}$ is a Lie groupoid. Set $\G = \G_1 \cup \G_2 \rightrightarrows M$, then we exhibit a Lie groupoid structure
on $\G$ with the help of $\tilde{\H}$, using a transport of structure argument. 
The definition of $\G$, together with the tubular neighbhorhood theorem for Lie manifolds then implies that $\A(\G) \cong \A$. 
We fix the collar neighborhoods as previously defined. We have $\G_2 = \tilde{\H}_{Y \times [-1,0]}^{Y \times [-1,0]}$. By transversality
of $Y$ in $M$ we have a canonical diffeomorphism $(\G_1)_{\U}^{\U} \cong \H \times (0,1)^2$. 
Denote this diffeomorphism by $\psi$. We construct the diffeomorphism
\[
\varphi \colon \tilde{\H}_{Y \times (-1,1)}^{Y \times (-1,1)} \iso \G
\]

Here we restrict $\G$ without loss of generality to $\U$ and denote the restriction by the same letter.
This will give the $C^{\infty}$-structure at $0$. Note that 
\[
\tilde{\H}_{Y \times (-1,1)}^{Y \times (-1,1)} = \H \times [(-1,0)^2 \cup \{0\} \times \Rr_{+} \cup (0,1)^2]. 
\]

Hence $\varphi$ is given by gluing the identity and the diffeomorphism $\psi$. A similar type of argument shows that the structural maps
of $\G$ are $C^{\infty}$ maps. Therefore $\G$ is a Lie groupoid.
\end{proof}

\section{Continuous field of $C^{\ast}$-algebras}

\label{IV}

In this section we answer the question of how to associate a $C^{\ast}$-algebra to a semi-groupoid in a functorial way.
To begin with we introduce suitable categories of fields of smooth groupoids and semi-groupoids as well as the category
of fields of $C^{\ast}$-algebras. Then we describe a contravariant functor from the category of fields of smooth groupoids to the
category of fields of $C^{\ast}$-algebras, or more generally, $C_0(T)$-algebras, where $T$ is a compact Hausdorff space. 
We contrast this functoriality with the case of the smaller Lie category of smooth semi-groupoids.
The next question is under what conditions a given field of semi-groupoids furnishes a continuous field of $C^{\ast}$-algebras.
Here we are interested in a particular type of deformation semi-groupoid over the cylinder part of a given decomposed Lie manifold.
We show that this semi-groupoid fulfills the necessary condition for functoriality with regard to a natural class of representations on Hilbert space.
Then, as a preparation for our study of the heat kernel in the final section, we introduce a functional calculus on the convolution $C^{\ast}$-algebra of 
the deformation semi-groupoid on a decomposed Lie manifold. 

\subsection*{Functoriality}

We recall the definition of $C_0(T)$-$C^{\ast}$-algebras where $T$ is a Hausdorff topological space. Then we establish what criteria are needed for the continuity of fields
of $C^{\ast}$-algebras as defined via Lie groupoids or Lie semi-groupoids. 
Recall that for a $C^{\ast}$-algebra $M(A)$ denotes the maximal unital $C^{\ast}$-algebra which contains $A$ as an essential ideal. Denote by $Z(B)$ the center of a given $C^{\ast}$-algebra $B$.

\begin{Def}
A $C_0(T)$-algebra is a tuple $(A, \theta)$ where $\theta \colon C_0(T) \to Z M(A)$ is a $\ast$-homomorphism such that $\theta(C_0(T)) A = A$.
\label{Def:field}
\end{Def}

Note that $a \in A$ can be identified with a family $a = (a_x)_{x \in T}$. Here $a_x \in A_x := A / C_x A, \ C_x := \{f \in C_0(T) : f(x) = 0\}$. 
The action of functions on $T$ is implemented by $\theta$ and we often abuse notation by writing $f \cdot a$ instead of $\theta(f) \cdot a$. 
We also write $A^T := \overline{\theta(C_0(T))} A$ and call $A$ \emph{non-degenerate} if $A = A^T$. 

\begin{Exa}
Consider $A = C_0(T)$ with $M(C_0(T)) = C_b(T), \ Z M(C_0(T)) = C_b(T)$. We will provide further examples induced by
the vast plethora of $C^{\ast}$-algebras associated to Lie groupoids.
\label{Exa:field}
\end{Exa}

Given two $C_0(T)$-algebras $A, B$ an arrow is given by a $\ast$-homomorphism $\psi \colon A \to B$ which is $C_0(T)$-linear, i.e.
$\psi(f \cdot a) = f \cdot \psi(a)$ for each $f \in C_0(T), \ a \in A$. 
Denote by $\C^{\ast}(T)$ the category with objects the $C_0(T)$-algebras and $C_0(T)$-linear $\ast$-homomorphisms as arrows between objects.

A particular case of $C_0(T)$-algebra is that given by a field of groupoids. 
\begin{Def}
A \emph{field} of Lie groupoids is a triple $(\G, T, p)$ where $\G$ is a Lie groupoid, $T$ is a $C^{\infty}$-manifold and $p \colon \G \to T$ is a submersion.
We denote by $p_0$ the restriction of $p$ to $\Gop$.
\label{Def:gfields}
\end{Def}

The \emph{category of $T$-Lie groupoids} $\LG(T)$ consists of objects the fields of Lie groupoids.
Let $(\G, T, p), \ (\H, T, \tilde{p})$ be $T$-Lie groupoids. Recall that a strict Lie groupoid morphism is a tuple 
$(f, \fop) \colon \G(T) \to \H(T)$ such that the diagram
\[
\begin{tikzcd}[every label/.append style={swap}]
\G \ar[d, shift left] \ar{d} \ar{r}{f} & \H \ar[d, shift left] \ar[d] \\
\Gop \ar{r}{\fop} & \Hop.
\end{tikzcd}
\]

commutes. 
An arrow in the category $\LG(T)$ is a tuple $(f, \fop)$ with $f$ and $\fop$ two $C^{\infty}$-maps such that the following diagram commutes
\[
\begin{tikzcd}[every label/.append style={swap}]
T & \\
\G \ar{u}{p} \ar[d, shift left] \ar{d} \ar{r}{f} & \H \ar{ul}{\tilde{p}} \ar[d, shift left] \ar[d] \\
\Gop \ar{d}{p_0} \ar{r}{\fop} & \Hop \ar[dl, "\tilde{p}_0", swap] \\
T & 
\end{tikzcd}
\]

We only mention briefly that the category $\C^{\ast}(T)$ has many useful stability properties, e.g. it is closed with regard to the formation of ideals,
quotients, direct sums, suspensions and it is $C_0(T)$-stable. The correct tensor product inside the category is the completed maximal tensor product.
Note that $\psi(A) \subset B^T$ for $\psi$ a possibly degenerate arrow and $\Mor_{C_0(T)}(A, B) = \Mor_{C_0(T)}(A, B^T)$. 

We want to study semi Lie groupoids, i.e. Lie groupoids where inverses do not always exist. This can be viewed as merely
the category of $C^{\infty}$-manifolds with additional data and structural maps as given in a Lie groupoid. 
We are mainly interested in the question of how to assign a $C_0(T)$-algebra to a given Lie groupoid or semi Lie groupoid.
To clarify these issues we will first prove a functoriality result for the association
\[
\C^{\ast} \colon \LG(T) \to \C^{\ast}(T). 
\]

In the functor $\C^{\ast} = \C_T^{\ast}$ we routinely suppress the dependency on $T$. For the following discussion we also refer to \cite{LR}, Section 5. 

\textbf{The map on objects:} Let $(\G, T, p)$ be a $T$-Lie groupoid. We describe a left $C_0(T)$-module structure on $C_c^{\infty}(\G)$. 
Define $(fa)(\gamma) = f(p(\gamma)) a(\gamma), \ f \in C_0(T), \ a \in C_c^{\infty}(\G)$. 
As a consequence $f(a \ast b) = (fa) \ast b = a \ast (fb)$ for each $f \in C_0(T), \ a,b \in C_c^{\infty}(\G)$. 
Also $C_0(T) C_c^{\infty}(\G) = C_c^{\infty}(\G)$ since if $f \in C_c(T)$ is chosen such that $f \equiv 1$ on $p(\supp a)$, then $a = fa \in C_0(T) C_c^{\infty}(\G)$. 

\textbf{Completion:} Endow $C_c^{\infty}(\G)$ with the inductive limit topology $\tau_{\to}$. We have the following condition  
\begin{align}
& \forall \ \pi \colon C_c^{\infty}(\G) \to \L(\H) \ \text{continuous representation} \notag \\ 
& \exists \ \varphi \colon C_0(T) \to \L(\H) \ \text{unique representation s.t.} \notag \\ 
& \pi(fa) = \varphi(f) \pi(a). \tag{$L$} \label{L}
\end{align}

As shown in Lemma 1.13. of \cite{Re} this condition holds for Lie groupoids. 
By an application of \eqref{L} for given $f \in C_0(T), \ a \in C_c^{\infty}(\G)$
\begin{align*}
& \|\pi(fa)\| \leq \|\varphi(f)\| \|\pi(a)\| \leq \|f\| \|a\| \\
& \Rightarrow \|fa\| \leq \|f\| \|a\|. 
\end{align*}

Therefore, for each $f \in C_0(T)$, the mapping $C_c^{\infty}(\G) \ni a \mapsto f a \in C_c^{\infty}(\G)$ extends continuously to $C^{\ast}(\G)$. 
Hence $C^{\ast}(\G)$ has a canonical $C_0(T)$-Banach module structure. It needs to be checked that $C_0(T) C^{\ast}(\G)$ is closed in $C^{\ast}(\G)$ by a separate argument, cf. \cite{LR}. 
Then non-degeneracy is clear by $C_0(T) C^{\ast}(\G) \supset C_0(T) C_c^{\infty}(\G) = C_c^{\infty}(\G)$. 
We have $f(a \ast b) = (fa) \ast b$ and $(f a)^{\ast} = f^{\ast} a^{\ast}$ for $a, b \in C^{\ast}(\G), \ f \in C_0(T)$, hence
$C^{\ast}(\G)$ is a $C_0(T)$-algebra. 

\textbf{The map on arrows:} Let $(f, \fop) \colon \G(T) \to \H(T)$ be a strict morphism of $T$-Lie groupoids. Define $\C^{\ast}(f, \fop)$, in short
$\C^{\ast}(f) \colon C^{\ast}(\H) \to C^{\ast}(\G)$ via the assignment
\begin{align*}
\C^{\ast}(f)(a) = a \circ f, \ a \in C^{\ast}(\H). 
\end{align*}

Let $g \in C_0(T)$, then $\C^{\ast}(g \cdot a) = (g \cdot a) \circ f$ where $g$ acts via multipliers on $C^{\ast}(\H)$. 
Denote by $(C^{\ast}(\G), \theta)$ and $(C^{\ast}(\H), \tilde{\theta})$ the corresponding actions. By definition
$\C^{\ast}(f)(g \cdot a) = \C^{\ast}(f)(\tilde{\theta}(g) \cdot a)$. It is then routine to verify that $\C^{\ast}(f)$ is
$C_0(T)$-linear: Take $g \in C_0(T)$ fixed
\begin{align*}
\C^{\ast}(f)(g \cdot a) &= \C^{\ast}(f)(\tilde{\theta}(g) \cdot a)(\gamma) \\
&= \C^{\ast}(f)((g \circ \tilde{p}) a)(\gamma) = ((g \circ \tilde{p}) \cdot a) f(\gamma) \\
&= (g \circ \tilde{p})(f(\gamma)) \cdot a(f(\gamma)) = g(\tilde{p}(f(\gamma))) \cdot a(f(\gamma)) \\
&= g(p(\gamma)) a(f(\gamma)) = \theta(g) \cdot \C^{\ast}(f)(a)(\gamma) = g \cdot \C^{\ast}(f)(a). 
\end{align*}

In the final line we used the commutativity 
\[
\xymatrix{
\G \ar[dr]_{p} \ar[r]^{f} & \H \ar[d]_{\tilde{p}} \\
& T 
}
\]

Altogether we obtain a \emph{contravariant} functor $\C^{\ast} \colon \LG(T) \to \C^{\ast}(T)$. 

What remains to be studied is the continuity of the resulting field of $C^{\ast}$-algebras. 
\begin{Def}
A $C_0(T)$-algebra $A$ is \emph{continuous} if $x \mapsto \|a_x\| \in [0,\infty)$ is continuous for each $x \in T$.  
\label{Def:cont}
\end{Def}

Up until now the functoriality makes sense for $\C^{\ast}$ as well as $\C_r^{\ast}$. The second part, namely $\C^{\ast}$ mapping
to \emph{continuous} $C_0(T)$-algebras, requires that we restrict to \emph{amenable} Lie groupoids. The reason is found in \cite[Theorem 5.5]{LR}
where upper semi-continuity of the field relies on the full $C^{\ast}$-algebra of the groupoid. We study next the general functoriality and continuity for Lie semi groupoids. There are two things to notice at the outset:
\emph{i)} For general functoriality of $T$ semi-groupoids the condition \eqref{L} is no longer true in general, but becomes an axiom. 
\emph{ii)} The proof of continuity of the field of $C^{\ast}$-algebras associated to a Lie groupoid crucially requires amenability.
This is not really available anymore on Lie semi-groupoids. Let us address both of these problems in what follows. Denote by $\LC$ the \emph{Lie category}, consisting of Lie semi-groupoids as objects
and smooth functors as arrows between objects. In the same vein we also introduce the category of $T$ Lie semi-groupoids $\LC(T)$. 
Denote by $\LCt(T)$ the subcategory of $\LC(T)$ consisting of objects the Lie semi-groupoids which fulfill condition \eqref{L}. 
We have a functorial diagram
\[
\xymatrix{
\LG(T) \ar[d]_{\iota} \ar[r]^{\Cstar} & \C^{\ast}(T) \\
\LCt(T) \ar[ur]^{\Ct^{\ast}} & 
}
\]

where $\iota$ denotes the inclusion functor and $\Ct^{\ast}$ is the functor constructed via representations given by \eqref{L}.  


\subsection*{Continuity for semi-groupoids}


In the remainder of this work we will be interested in a deformation semi-groupoid suitable for boundary value problems on Lie manifolds with boundary.
We will define the reduced $C^{\ast}$-algebra associated to this semi-groupoid and show that this yields a continuous field of $C^{\ast}$-algebras.
In order to give the reader a better appreciation of some of the difficulties involved in the study of the continuity of fields
of $C^{\ast}$-algebras associated to semi-groupoids we have discussed above the case of Lie groupoids. We are given the following data: A Lie manifold $(M, \A, \V)$ with boundary $(Y, \B, \W)$. Let $\G \rightrightarrows M$ denote an integrating $s$-connected Lie groupoid, i.e. $\A(\G) \cong \A$. Fix the generalized exponential map (see also \cite{LR}, \cite{BS}) $\Exp \colon \A(\G) \to \G$
as well as the exponential map $\exp \colon \A(\G) \to M$ induced by an invariant connection on $\A$. 
Consider the half space $\Abdy \subset \A$ which is defined as follows:
\[
\Abdy := \{v \in \A : \exp(-tv) \in M, \ t > 0 \ \text{small}\}. 
\]
This is the natural generalization of the half-space introduced in \cite{ANS} for the case of a compact manifold with boundary and trivial Lie structures.
We restrict the invariant connection $\nabla$ of $\A$ to $\Abdy$ and also obtain the restriction of the generalized exponential which we denote by the same symbol $\Exp \colon \Abdy \to \G$.
Then we define the deformation semi-groupoid $\Gbdy \rightrightarrows M \times I$ where $I = \Rr$ or $I = [0,1]$ and $I^{\ast} := I \setminus \{0\}$ as follows
\[
\Gbdy := \G \times (0,1] \cup \Abdy \times \{0\}.
\]
Note that a priori $\Gbdy$ has a natural semi-groupoid structure: the groupoid structure of $\G$, of $(0,1]$ viewed simply as a set
as well as the semi-groupoid structure of $\Abdy$, viewed as a bundle of half-spaces.
Note that $\Gbdy \subset \Gad$ where $\Gad$ is the adiabatic groupoid $\G \times (0,1] \cup \A \times \{0\}$.
By the local diffeomorphism property of $\Exp$ we can describe a smooth structure on $\Gad$. It is defined by glueing a neighborhood $\O$ of $\A \times \{0\}$ to $\G\times I^\ast$ via
\[
\O \ni (v, t) \mapsto \begin{cases} v, \ t = 0 \\
(\Exp(-tv), t), \ t > 0. \end{cases}
\]

Then the smooth structure of $\Gbdy \subset \Gad$ is the one induced by $\Gad$ with regard to the locally compact subspace topology, i.e.
$C_c^{\infty}(\Gbdy) := C_c^{\infty}(\Gad)_{|\Gbdy}$.
The next goal is to define a continuous field of $C^{\ast}$-algebras over the Lie semi-groupoid $\Gbdy$. 

\begin{Def}
The $C^{\ast}$-algebra associated to $\Gbdy$ is defined as the completion $C_r^{\ast}(\Gbdy) := \overline{C_c^{\infty}(\Gbdy)}^{\|\cdot\|}$.
We define the norm $\|\cdot\|$ as the reduced norm with regard to the representation $\tilde{\pi} := (\pi , \pi^{\partial})$ on the Hilbert space 
$\H := L^2(\G) \oplus L^2(\Abdy_{|Y})$. Here $\pi = (\pi_t)_{0 < t \leq 1}$ where for $0 < t \leq 1$ 
\[
\pi_t(f) \xi(\gamma) = \frac{1}{t^n} \int_{\G_{s(\gamma)}} f(\eta, t) \xi(\eta) \,d\mu_{s(\gamma)}(\eta).
\]

Define the representation $\pi_{0}^{\partial}$ on the Hilbert space $L^2(\Abdy_{|Y})$ by
\[
\pi_0^{\partial}(f) \xi(v) = \int_{\Abdy_{\tilde{\pi}(v)}} f(v - w) \xi(w)\,dw.
\]

\label{Def:CGbdy}
\end{Def}

We also introduce a $C^{\ast}$-algebra associated to the half-space $\Abdy$.
\begin{Def}
Define the reduced $C^{\ast}$-algebra of $\Abdy$ in terms of the completion $C_r^{\ast}(\Abdy) := \overline{C_c^{\infty}(\Abdy)}^{\|\cdot\|_{\tilde{\pi}_0}}$,
where $\tilde{\pi}_0 = (\pi_0, \pi_0^{\partial})$ is the representation of $C_c^{\infty}(\Abdy)$ on the Hilbert space $\H := L^2(\A) \oplus L^2(\Abdy_{|Y})$.
We define
\[
\pi_0(f) \xi(v) = \int_{\A_{\pi(v)}} f(v - w) \xi(w) \,dw
\]

and
\[
\pi_0^{\partial}(f) \xi(v) = \int_{\Abdy_{\tilde{\pi}(v)}} f(v - w) \xi(w) \,dw.
\]

\label{Def:CAbdy}
\end{Def}

Note that the above definition furnishes a field of $C^{\ast}$-algebras with $\varphi_t \colon C_r^{\infty}(\Gbdy) \to C_r^{\ast}(\Gbdy)(t)$ where
$C_r^{\ast}(\Gbdy_t) = C_r^{\ast}(\G), \ t \not= 0$ and $C_r^{\ast}(\Gbdy_0) = C_r^{\ast}(\Abdy)$. This leads us to the following result.
\begin{Thm}
Let $(M, \A, \V)$ be a decomposed Lie manifold with hypersurface $(Y, \B, \W)$ and integrating Lie groupoid $\G \rightrightarrows M$. 
The field $(C_r^{\ast}(\Gbdy), \{C_r^{\ast}(\Gbdy_{t}), \varphi_t\}_{t \in [0,1]})$ is a continuous field of $C^{\ast}$-algebras.
\label{Thm:CGbdy}
\end{Thm}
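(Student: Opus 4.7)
The plan is to prove continuity in two steps: upper semi-continuity from the $C_0([0,1])$-algebra structure, and lower semi-continuity at each point of $[0,1]$, with $t_0=0$ the delicate case.

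First I would verify that $C_r^{\ast}(\Gbdy)$ is a $C_0([0,1])$-algebra in the sense of Definition \ref{Def:field}. The projection $p \colon \Gbdy \to [0,1]$ onto the deformation parameter induces an action of $C_0([0,1])$ on $C_c^{\infty}(\Gbdy)$ by $(f \cdot a)(\gamma,t) = f(t)\,a(\gamma,t)$, which is compatible with convolution and with the pair representation $\tilde\pi = (\pi,\pi^{\partial})$. Verifying axiom \eqref{L} for $\Gbdy$, for instance via a partition-of-unity argument in the parameter $t$, extends this to the reduced completion, and the fibers identify with $C_r^{\ast}(\G)$ for $t>0$ and $C_r^{\ast}(\Abdy)$ for $t=0$. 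Upper semi-continuity of $t \mapsto \|a_t\|$ is then standard: lift $a_{t_0}$ to $b \in C_r^{\ast}(\Gbdy)$ and use that $\|(a-b)_t\|$ is small near $t_0$ in the quotient norm.

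For $t_0 \in (0,1]$, lower semi-continuity is immediate. On $(0,1]$ the semi-groupoid $\Gbdy$ reduces to the trivial deformation $\G \times (0,1]$, and the representations $\pi_t$ differ from the regular representation of $\G$ only by the explicit factor $t^{-n}$, so $t \mapsto \|\pi_t(a_t)\|$ depends continuously on $t \in (0,1]$.

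The decisive step is lower semi-continuity at $t_0=0$, for which I would use the rescaling built into the smooth structure of $\Gad$. In the chart $(v,t) \mapsto (\Exp(-tv),t)$ near $\A \times \{0\}$, any $a \in C_c^{\infty}(\Gbdy)$ may, up to a vanishing error in the groupoid norm, be written as $a_t = \alpha_t(f_t)$ with $\alpha_t \colon C_c^{\infty}(\Abdy) \to C_c^{\infty}(\G)$ the rescaling $\alpha_t(f)(\gamma) = t^{-n} f(t^{-1}\Exp^{-1}(\gamma))$ and $f_t \to a_0$ in $C_c^{\infty}(\Abdy)$ as $t \downarrow 0$. A change of variables in the definitions of $\pi_t$ then shows that $\pi_t \circ \alpha_t$ converges strongly to $\tilde\pi_0 = (\pi_0,\pi_0^{\partial})$: for test vectors supported in the interior one recovers the bulk convolution $\pi_0$ on $L^2(\A)$, while for test vectors concentrated in a shrinking tubular neighborhood of $Y$ the half-space condition defining $\Abdy$ is preserved under the rescaling and one recovers the half-space convolution $\pi_0^{\partial}$ on $L^2(\Abdy|_Y)$. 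Combined with upper semi-continuity this yields the claimed continuity at $t_0=0$.

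The main obstacle will be the boundary contribution: showing that $\pi_0^{\partial}$ genuinely emerges in the strong limit, not merely the bulk representation $\pi_0$. This requires transversality of $Y$ in $M$ together with the tubular neighborhood theorem (Theorem \ref{Thm:tubnbhd}) to identify the exponential chart near $Y$ with the half-space fibration $\Abdy|_Y$, and a careful localization of $\alpha_t(f_t)$ near $Y$ so that mass escaping through the hypersurface in the rescaled groupoid is captured as the boundary summand $\pi_0^{\partial}$ in the limit. This is a feature absent from the purely closed adiabatic case, where amenability arguments for Lie groupoids would suffice; here the semi-groupoid structure forces us to rely on the explicit reduced representation $\tilde\pi$ and the compatibility of rescaling with the half-space.
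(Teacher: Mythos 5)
Your overall strategy coincides with the paper's: reduce to upper plus lower semi-continuity, treat $t_0\in(0,1]$ as immediate, and obtain lower semi-continuity at $t_0=0$ by lifting elements of $C_c^{\infty}(\Abdy)$ to $C_c^{\infty}(\Gbdy)$ via the rescaled exponential chart and showing that both $\pi_0$ and $\pi_0^{\partial}$ arise as limits of the $\pi_t$. The paper likewise localizes: away from a tubular neighborhood of $Y$ one has $\Abdy=\A$ and the argument of Aastrup--Nest--Schrohe applies verbatim, so everything reduces to the cylinder case $M\cong Y\times\overline{\Rr}_+$, $\G=\H\times(\Rr_+^2\cup\{0\}\times\Rr_+)$, with the dense subalgebra $C_c^{\infty}(\A)\oplus C_c^{\infty}(\B\times\Rr_+^2)$.

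However, there is a genuine gap at precisely the step you flag as ``the main obstacle'': you assert that for test vectors concentrated in a shrinking tubular neighborhood of $Y$ the rescaled representations converge strongly to $\pi_0^{\partial}$, but you do not specify \emph{at what rate} the neighborhood shrinks, and without that the claim does not follow. If the neighborhood has width independent of $t$ you only recover the bulk representation $\pi_0$; if it shrinks at rate $O(t)$ the compression degenerates. The paper's resolution is a two-scale argument: choose $a_t$ with $a_t\to 0$ and $a_t/t\to\infty$, let $P_t$ be the compression to $\H\times[0,a_t]^2$ and $D_t$ the dilation by $t$, and observe that $\|P_t\pi_t(f)P_t\|=\|D_tP_t\pi_t(f)P_tD_{t^{-1}}\|$ equals the norm of an auxiliary representation $\eta_t$ acting on $C_c^{\infty}(\Had\times\Rr_+^2)$; because $a_t/t\to\infty$, after dilation the interval $[0,a_t/t]$ exhausts $\Rr_+$ and $\eta_t\to\eta_0$, whose norm computes exactly $\|\pi_0^{\partial}(f\oplus K)\|$ on the half-space fibers $\Abdy|_Y\cong\B\times\Rr_+$. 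Since $\liminf_t\|\pi_t(\cdot)\|\geq\liminf_t\|P_t\pi_t(\cdot)P_t\|$, this yields the boundary half of the lower bound. Your proposal needs this intermediate scale (or an equivalent device) to be a proof; everything else you write is consistent with the paper's argument. A secondary, smaller omission: the summand $K\in C_c^{\infty}(\B\times\Rr_+^2)$ of the dense subalgebra contributes nontrivially to $\pi_0^{\partial}$ while converging weakly to zero in the bulk estimate, and your decomposition $a_t=\alpha_t(f_t)$ does not account for these ``boundary-supported'' elements separately, as the paper's two estimates (one for $\pi_0$, one for $\pi_0^{\partial}$) do.
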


The continuity for the semi-groupoid $\Gbdy$ is a generalization of the result proven in \cite{ANS}, where the case of a compact manifold with boundary (and the trivial Lie structure of all vector fields) is studied. 
Since our case is vastly more general we give more details below. We will describe the strategy of the argument, highlighting the main differences to the argument in loc. cit. for our case of general decomposed Lie manifolds with boundary.

\begin{proof}
Note that the condition in the definition of the semi-algebroid $\Abdy$ only takes effect at the regular boundary stratum $Y$.
The difficulty in the proof of continuity of the field is the upper and lower semi-continuity of the field at $t = 0$. 
If we restrict the groupoid outside any tubular neighborhood of $Y$ in $M$ then $\Abdy$ is identical to $\A$.
For this restricted groupoid the argument for continuity of the field goes along the same lines as the proof given in \cite{ANS}.
Therefore we can without loss of generality focus on the case where $M$ is of cylinder type.
Since $M$ is assumed to be of cylinder type we identify $M \cong Y \times \overline{\Rr}_+$. Let $\H \rightrightarrows Y$ be a Lie groupoid such that $\A(\H) \cong \B$.
The groupoid $\G$ takes the form $\G = \H \times (\Rr_{+}^2 \cup \{0\} \times \Rr_{+}) \rightrightarrows Y \times \overline{\Rr}_+$
with smooth structure as defined in Section \ref{III}. We first define the auxiliary algebra $C_{tc}^{\infty}(\Abdy) := C_c^{\infty}(\A) \oplus C_c^{\infty}(\B \times \Rr_{+}^2)$, which is a 
dense $\ast$-subalgebra of $C_r^{\ast}(\Abdy)$. 
Choose a cutoff $\psi \in C_c^{\infty}(\G)$ such that $0 \leq \psi \leq 1$ and $\psi_{|U} \equiv 1$ for a neighborhood $M \subset U \subset \G$
for which there is a corresponding neighborhood of the zero section, $M \subset \mathcal{O} \subset \A(\G)$ for which $\Exp \colon \mathcal{O} \iso \supp \psi \subset \G$
is a diffeomorphism. Throughout we will make use of the lifting of an element $f \in C_c^{\infty}(\A)$, or $f \in C_c^{\infty}(\Abdy)$ to an element of $C_c^{\infty}(\Gbdy)$, defined by
\begin{align}
\tilde{f}(\gamma, t) &:= \psi(\gamma) f\left(-\frac{\Exp^{-1}(\gamma)}{t}\right). \tag{$l$} \label{l}
\end{align}

We first show the lower semi-continuity of the field, i.e.
\begin{align}
& \liminf_{t \to 0} \|\varphi_t(a)\| \geq \max\{\|\pi_0(a)\|, \|\pi_0^{\partial}(a)\|\}. \tag{$lsc$} \label{lower}
\end{align}

The proof of lower semi-continuity is fascilitated by a reduction of the proof of lower semi-continuity of representations of $C_c^{\infty}(\Gbdy)$ and $C_c^{\infty}(\Had \times \overline{\Rr}_+)$.
Making use of the generalized exponential $\Exp_{\partial} \colon \B \to \H$, we can introduce a lifting \eqref{l} also for elements $f \in C_c^{\infty}(\B \times \Rr_{+})$ to $C_c^{\infty}(\Had \times \Rr_{+})$ and, abusing notation, we also
denote by $\tilde{f}$. Fixing a Haar system $(\mu_{x, t})_{(x, t) \in M \times I}$, introduce the norms $\|\cdot\|_{\infty, t}$ on $\Gbdy$ by
\[
\|g\|_{\infty, t}^2 := \sup_{x \in \Gop} \|g\|_{L^2(\Gbdy_{x,t}, \mu_{x,t})}.
\]

Abusing notation again, we use the same symbols $\|\cdot\|_{\infty, t}$ for the corresponding norms on $C_c^{\infty}(\Had \times \Rr_{+})$, where
we fix a Haar system $\mu_{x,t}^{\partial}$ on $\Had$ and the standard Lebesgue measure on $\Rr_{+}$. 
Define the representation $\rho_t \colon C_c^{\infty}(\Abdy) \to \L(L^2(\G))$ by 
\[
(\rho_t f) \xi(\gamma) = \int_{\G_{s(\gamma)}} \tilde{f}(\gamma \eta^{-1}, t) \xi(\eta)\,d\mu_{s(\gamma)}(\eta).
\]

A straightforward generalization of \cite{ANS}, Proposition 2.22 yields the density of $C_c^{\infty}(\Abdy)$ in $C_c^{\infty}(\Gbdy)$, which implies that it is sufficient to show the estimates
\begin{align}
&\liminf_{t \to 0} \|\rho_t(\tilde{f} + \tilde{K}\| \geq \|\pi_0(f)\|, \ f \oplus K \in C_{tc}^{\infty}(\Abdy), \label{aux1} \\
&\liminf_{t \to 0} \|\rho_t(\tilde{f} + \tilde{K}\| \geq \|\pi_0^{\partial}(f \oplus K)\|, f \in C_c^{\infty}(\A), \ K \in C_c^{\infty}(\B \times \Rr_{+}^2). \label{aux2}
\end{align}

In order to show \eqref{aux1} we check that 
\begin{align*}
& \|\pi_t(\tilde{f} + \tilde{K})\| = \sup\left\{\left\|\frac{1}{t^n} \int_{\G_{\bullet}} (\tilde{f}(\bullet \cdot \eta^{-1}, t) + \tilde{K}(\bullet \cdot \eta^{-1}, t) g(\bullet \cdot \eta^{-1}, t) \,d\mu_{\bullet}(\eta)\right\|_{\infty, t} : \|g\|_{\infty} \leq 1 \right\}, \\
& \|\pi_0(f)\| = \sup\left\{\left\| \int f(v, 0) g(\bullet - v, 0)\,dv \right\| : \|g\|_{\infty} \leq 1\right\}.
\end{align*}

At this point we recall the structure of the groupoid $\Gbdy$ as $\H \times (\Rr_{+}^2 \cup \{0\} \times \Rr_{+})$ and note that the set of $g$
for which $g(\gamma, t) = 0$ with $s(\gamma) = (x', 0) \in Y \times \overline{\Rr}_{+} \cong M$ is dense in $\{g \in C_c^{\infty}(\Gbdy) : \|g\|_{\infty} \leq 1\}$.
The weak convergence of $\tilde{K}$ to zero yields for $g \in C_c^{\infty}(\Gbdy)$
\[
\lim_{t \to 0} \left\|\frac{1}{t^n} \int (\tilde{f}(\bullet \cdot \eta^{-1}, t) + \tilde{K}(\bullet \cdot \eta^{-1}, t)) g(\bullet \cdot \eta^{-1}, t)\,d\mu_{\bullet}(\eta)\right\|_{\infty, t} = \left\|\int f(v, 0) g(\bullet - v)\,dv\right\|_{\infty, 0}.
\]

The equality \eqref{aux1} follows. We prove \eqref{aux2} by fixing $(a_t)_{t \in I}$ such that $a_t \to 0$ for $t \to 0$ and $\frac{a_t}{t} \to \infty$ for $t \to 0$.
Define representations of $f \in C_c^{\infty}(\Gbdy), \ K \in C_c^{\infty}(\B \times \Rr_{+}^2)$ on $C_c^{\infty}(\Had \times \Rr_{+}^2)$ via
\[
\eta_t(f) g(\gamma, t, b) = t^{n+1} \int_{[0, \frac{a_t}{t}]} \int_{\G_{s(\gamma)}} f(\gamma \eta^{-1}, tb, ta, t) g(\eta, t, a) \,d\mu_{s(\gamma)}(\eta) \,da
\]

for $t \not= 0$ and $b \in \left[0, \frac{a_t}{t}\right]$. As well as
\begin{align*}
\eta_0(f) g(v, 0, b) &= \int_{\B_x \times \Rr_{+}} f(v - w, b- a) g(w, 0, a) \,dw \,da, \\
\eta_t(K) g(\gamma, t, b) &= t^{-n + 1} \int_{[0, \frac{a_t}{t}]} \int_{\G_{s(\gamma)}} K(\Exp_{\partial}^{-1}(\gamma \eta^{-1} t^{-1}, b, a) g(\gamma \eta^{-1}, t, a)\,d\mu_{s(\gamma)}(\eta) \,da. \\
\eta_0(K) g(v, 0, b) &= \int_{\B_x \times \Rr_{+}} K(v - w, b, a) g(w, 0, a)\,dw \,da. 
\end{align*}

Denote by $P_t$ the operator given by multiplication with the characteristic function $\H \times [0, a_t]^2$, where $[0, a_t]^2$ is the pair groupoid.
Also denote by $D_t$ the dilation by $t$ operator. 
Then we have $\|P_t \pi_t(f) P_t\| = \|D_t P_t \pi_t(f) P_t D_{t^{-1}}\| = \sup\{\|\eta_t(f) g\|_{\infty, t} : \|g\|_{\infty} \leq 1\}$.
Hence $\|\pi_0^{\partial}(f \oplus K)\| = \sup\{\|\eta_0(f \oplus K) g\|_{\infty, 0} : \|g\|_{\infty} \leq 1\}$. 
Let $g \in C_c^{\infty}(\Gbdy)$ and note that for $t$ small we can without loss of generality assume that $g$ takes the form
$g_0 \left(-\frac{\Exp_{\partial}(\gamma \eta^{-1})}{t}, t, a\right)$ with $g_0 \in C_c^{\infty}(\B \times [0,1] \times \overline{\Rr}_{+})$. 
Thence
\[
\lim_{t \to 0} \|(\eta_t(\tilde{f} + \tilde{K})g\|_{\infty, t} = \|(\eta_0(f \oplus K) g\|_{\infty, 0} 
\]

which implies \eqref{aux2}. From \eqref{aux1} an \eqref{aux2} we obtain the lower semi-continuity \eqref{l}. 
The upper semi-continuity is the inequality
\begin{align}
& \limsup_{t \to 0} \|\varphi_t(a)\| \leq \max\{\|\pi_0(a)\|, \|\pi_0^{\partial}(a)\|\}. \tag{$usc$} \label{upper}
\end{align}

This follows by the density result of \cite{ANS}, Proposition 2.22. The remainder of the argument is analogous to loc. cit. and 
we omit the details. The estimates \eqref{lower} and \eqref{upper} together imply the continuity of the field of $C^{\ast}$-algebras.
\end{proof}


\subsection*{Functional calculus}

On a given decomposed Lie manifold we define a functional calculus taking values in the reduced $C^{\ast}$-algebra of the 
deformation semi-groupoid considered in the previous section.
Denote by $\P$ the set of functions in the Schwartz class $\S(\Rr)$ which have compactly supported Fourier transform.

\begin{Thm}
Let $(M, \A, \V)$ be a decomposed Lie manifold with hypersurface $(Y, \B, \W)$ with corresponding integrating groupoid $\G \rightrightarrows M$. 
Denote by $\Dd := (\Dirac_{x, t})_{(x, t) \in M \times I}$ an equivariant family of geometric Dirac operators associated to $\nablaslash^{W}$ on $\Gbdy$.
Then there exists a ring homomorphism
\[
\Psi_{\Dd} \colon C_0(\Rr) \to C_r^{\ast}(\Gbdy)
\]

which is compatible with the representations $(\pi, \pi^{\partial})$, i.e. given $\pi = (\pi_t)_{t \in I}$  and $\pi^{\partial}$ we have
\begin{align}
\pi_{x,t}(\Psi_{\Dd}(f)) &= f(\Dirac_{x,t}), \ f \in \P, \label{R1}
\end{align}

as well as for $\pi_0^{\partial} \colon C_c^{\infty}(\Abdy) \to \L(L^2(\G_{|Y}))$ 
\begin{align}
\pi_0^{\partial}(f) \xi(\gamma) &= \int_{\Abdy_{r(\gamma)}} f(w) \psi(w) \xi(\Exp^{-1}(\gamma) - w)\,dw \label{R2}
\end{align}

with 
\[
\pi_0^{\partial}(\Psi_{\Dd}(f)) = f(\Dirac_0), \ f \in \P. 
\]
\label{Thm:fctcalc}
\end{Thm}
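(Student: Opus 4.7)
The plan is to construct $\Psi_\Dd$ first on the dense subspace $\P \subset C_0(\Rr)$ using Fourier inversion applied to the wave group of $\Dd$, and then to extend by continuity. Given $f \in \P$ with $\widehat f$ supported in $[-R,R]$, I would set
\[
\Psi_\Dd(f) := \frac{1}{2\pi} \int_{-R}^{R} \widehat f(\sigma)\, k_\sigma\, d\sigma,
\]
where $k_\sigma$ is the reduced Schwartz kernel on $\Gbdy$ of the wave group $\exp(i\sigma\Dd)$ of the equivariant family. The decisive input is the finite propagation speed for geometric Dirac operators with respect to the $\A$-metric: for each $t \in (0,1]$ the kernel of $\exp(i\sigma \Dirac_{x,t})$ is supported within $\G$-distance $|\sigma|$ of the units, and the adiabatic rescaling $(v,t)\mapsto \Exp(-tv)$ built into the smooth structure of $\Gad$ makes the $t\to 0$ limit an honest family of compactly supported kernels on the bundle of half-spaces $\Abdy$. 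Thus $k_\sigma$ lies in $C_c^\infty(\Gbdy)$ uniformly in $\sigma \in [-R,R]$.

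Next I would check that $\sigma \mapsto k_\sigma$ is smooth with values in $C_c^\infty(\Gbdy)$ equipped with the inductive limit topology, so that the integral converges there, and a fortiori in $C_r^\ast(\Gbdy)$. The compatibility conditions \eqref{R1} and $\pi_0^\partial(\Psi_\Dd(f)) = f(\Dirac_0)$ then reduce to the fact that the representations $\pi_{x,t}$ and $\pi_0^\partial$ intertwine $k_\sigma$ with the wave groups on the respective fibers, combined with Fourier inversion applied to the spectral measure of each (essentially selfadjoint) operator $\Dirac_{x,t}$ and $\Dirac_0$. The ring homomorphism property on $\P$ reduces to the semigroup identity $\exp(i\sigma_1\Dd)\exp(i\sigma_2\Dd) = \exp(i(\sigma_1+\sigma_2)\Dd)$, i.e.\ to a convolution identity for wave kernels on $\Gbdy$.

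For the extension from $\P$ to $C_0(\Rr)$ I would use the spectral calculus bound $\|f(\Dirac_{x,t})\| \le \|f\|_\infty$ together with the continuity of the field from Theorem \ref{Thm:CGbdy} to obtain
\[
\|\Psi_\Dd(f)\|_{C_r^\ast(\Gbdy)} = \sup_{(x,t)} \|\pi_{x,t}(\Psi_\Dd(f))\| \le \|f\|_\infty, \quad f \in \P,
\]
and density of $\P$ in $C_0(\Rr)$ then yields the continuous extension. The main obstacle I anticipate is verifying that the wave kernel $k_\sigma$ genuinely stays supported inside $\Abdy$ at $t=0$ rather than leaking into $\A\setminus \Abdy$; this requires combining the finite propagation estimate with the boundary symmetry of $\Dirac$ established in Proposition \ref{Prop:Dirac} and Lemma \ref{Lem:bdysym}, showing that the normal component of the wave flow respects the half-space structure. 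Once this is in place, smoothness across $t=0$ in the $\Gad$-sense and the formula for $\pi_0^\partial$ follow from the explicit lifting \eqref{l} used in the proof of Theorem \ref{Thm:CGbdy}.
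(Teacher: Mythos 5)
Your proposal follows essentially the same route as the paper: define $\Psi_{\Dd}$ on the dense subalgebra $\P$ via Fourier inversion applied to the wave group $e^{i\sigma\Dirac_{x,t}}$, use finite propagation speed together with $\Gbdy$-equivariance to obtain a compactly supported reduced convolution kernel (the paper forms the kernel over $\Gad$ and restricts to $\Gbdy$, which is how it handles the support question at $t=0$ that you flag), verify compatibility with $(\pi,\pi_0^{\partial})$ through the convolution action, and extend to $C_0(\Rr)$ by density using the $L^2$-spectral bound. The argument is correct and matches the paper's proof in all essential respects.
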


\begin{proof}
Denote by $e^{i \tau \Dirac_{x,t}}$ the solution operator to the wave equation for $\Dirac_{x,t}$. For $f \in \P$ define via the functional calculus (cf. \cite[Section 3.C]{Ch})
\[
f(\Dirac_{x,t}) = \frac{1}{2\pi} \int \hat{f}(\tau) e^{i \tau \Dirac_{x,t}} \,d\tau. 
\]

By the estimates in the proof of Proposition 7.20 in \cite{R}, we obtain that $f(\Dirac_{x,t})$ is a smoothing operator of finite propagation speed. 
The family $f(\Dirac_{x,t})$ is $\Gbdy$-invariant (i.e. with regard to the natural action of $\Gad \supset \Gbdy$). 
Take the reduced kernel $k^f$ over $\Gad$ and denote its restriction to $\Gbdy$ by the same symbol. By finite propagation speed and the equivariance 
it follows that $k^f$ is compactly supported, see also \cite{R0}. Define $\Psi_{\Dd}(f)$ via the assignment $\gamma \mapsto k_{s(\gamma)}^f$.
The latter assignment furnishes a ring homomorphism, cf. the proof of Proposition 9.20 in \cite{R}. The compatibility \eqref{R1} follows
since $k^f$ is a reduced convolution kernel and \eqref{R2} follows by the definition of $\Dirac_0$ on $\Abdy$.
We obtain the $L^2$-action of $\Psi_{\Dd}$:
\begin{align*}
f(\Dirac_{x,t}) g(\gamma) &= \pi_{x,t}(\Psi_{\Dd}(f)) g(\gamma) \\
&= (\Psi_{\Dd}(f) \ast g)(\gamma), \ t > 0
\end{align*}

as well as
\begin{align*}
f(\Dirac_0) g(v) &= \pi_0^{\partial}(\Psi_{\Dd}(f)) g(v) \\
&= (\Psi_{\Dd}(f) \ast g)(v), \ t = 0. 
\end{align*}

Altogether we have shown that $\Psi_{\Dd} \colon \P \to C_c^{\infty}(\Gbdy)$ is a ring homomorphism that is compatible with $(\pi, \pi_0^{\partial})$. 
Since $\P$ is dense in $C_0(\Rr)$ and $C_r^{\ast}(\Gbdy)$ is defined as the completion with regard to the representations $(\pi, \pi_0^{\partial})$,
we obtain by the $L^2$-spetral theorem that the map $\P \to C_c^{\infty}(\Gbdy)$ is continuous with regard to the $C_0(\Rr)$-norm.
Hence $\Psi_{\Dd}$ extends continuously to a ring homomorphism $C_0(\Rr) \to C_r^{\ast}(\Gbdy)$. 
\end{proof}

\section{Index formula}

We give the proof of a generalized APS-type index formula on a decomposed spin Lie manifold for a geometric Dirac operator subject to local and non-local boundary conditions.
The local boundary conditions are posed on a stratum of the cylinder part of the decomposed Lie manifold. 
On the other hand we have non-local APS boundary conditions which are implicit on the complementary part of the manifold, where
the boundary is pushed to infinity. The technique we employ to prove the index formula makes use of the theory outlined in the previous sections:
The functional calculus from section \ref{IV} is used, combined with the rescaling bundle technique from \cite{BS}, in order to derive an index formula
on standard spin Lie manifolds.

Consider a decomposed Lie manifold $(M, \A, \V)$ with hypersurface $(Y, \B, \W)$, i.e. $M = M_1 \cup M_2$ where $M_2$ is of cylinder type.
We study a geometric admissible Dirac operator defined on $M$. The operator $D$ is assumed to be decomposed into two geometric admissible Dirac operators, i.e. $D_{|M_1} = D_1, \ D_{|M_2} = D_2$. 
The operator $D_2$ is of model type and admits local boundary conditions, while $D_1$ is a graded geometric admissible Dirac operator on the complement $M_1$.
Fix the notation $\Abdy \to M$ for the semi-Lie algebroid as introduced in the previous section. 
With the help of the functional calculus introduced in the previous section and the rescaling argument from \cite{BS}, 
we can separate the index calculation for the two cases of $D_1$ and $D_2$.

\subsection*{Renormalizable Lie manifolds}

Similarly as in \cite{BS} we define a renormalized trace for renormalizable Lie manifolds.
Denote by $\Omega^1(\A)$ the $1$-forms on $\A$ and by $\dot{C}^{\infty}(M, \Omega^1(\A))$ the smooth sections vanishing to all orders at the boundary strata of $M$.
\begin{Def}
A Lie manifold $(M, \V, \A)$ is called \emph{renormalizable} if there is a  functional $\TrV \colon C^{\infty}(M, \OmegaV^1) \to \Cc, \ f \mapsto \avintV f$ with the following properties:

\emph{1)} The integral $ \int_{M} f$ exists for $f\in  \dot{C}^\infty(M, \OmegaV^1) $, and  the functional $\TrV$ is a linear extension. 

\emph{2)} There is a minimal $k \in \Rr$ such that $G(f)(z)=\int_M \rho^z f $ defines a function  $G(f)$ holomorphic on $\Re (z) > k-1$, which extends meromorphically to $\Cc$.

Then we can define 
\[
\davintVM f : \ \text{regularized value (zero order Taylor coefficient) at} \ z = 0 \ \text{of} \ G(f). 
\]
\label{Def:renorm}
\end{Def}

\begin{Lem}
Let $(M, \A, \V)$ be a renormalizable Lie manifold with boundary $(Y, \B, \W)$. Then $(Y, \B, \W)$ is renormalizable as well. 
\label{Lem:renorm}
\end{Lem}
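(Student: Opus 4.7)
The plan is to transport the renormalization on $M$ down to $Y$ through the global tubular neighborhood of Theorem \ref{Thm:tubnbhd}. I would fix the diffeomorphism $\Phi \colon \U \iso Y_{[0,r)}$ with boundary defining function $u \equiv \rho_Y$, together with the decomposition $\Phi_{\ast}(\mu) = |du| \otimes \nu$ afforded by Proposition \ref{Prop:measured}. Since $Y$ is transverse to every singular boundary hyperface of $M$ (Remark \ref{Rem:Liebdy} \emph{i)}), I would first choose a compatible system of boundary defining functions for the singular strata of $M$ whose restrictions to $\U$ depend only on the $Y$-coordinate, so that the total singular defining function $\rho$ restricts inside $\U$ to the pullback of a corresponding product $\rho_{\partial}$ on $Y$.

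Next I would verify condition \emph{1)} of Definition \ref{Def:renorm} on $Y$: for $f$ vanishing to all orders at $\partial Y$, the integral $\int_Y f$ converges because $Y_0$ inherits bounded geometry from $(M, \A, \V)$ via the Lie subalgebroid structure of $\B$. The crux is condition \emph{2)}. Given $f \in C^{\infty}(Y, \Omega^1(\B))$, I would lift it to a density $\tilde{f} \in C^{\infty}(M, \Omega^1(\A))$ supported in $\U$ by
\[
\tilde{f} := \chi(u) \cdot (\Phi^{-1})^{\ast}(f \otimes |du|),
\]
where $\chi \in C_c^{\infty}([0,r))$ is a fixed cutoff with $\int_0^r \chi(u)\,du = 1$. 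Using $\Phi_{\ast}\mu = |du| \otimes \nu$ together with the $u$-independence of $\rho$ on $\U$, Fubini would yield
\[
G_M(\tilde{f})(z) \;=\; \int_M \rho^z \tilde{f} \;=\; \int_Y \rho_{\partial}^z f \;=\; G_Y(f)(z).
\]

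Applying the renormalizability of $M$ to the left-hand side produces a meromorphic extension of $G_M(\tilde{f})(z)$ to $\Cc$, holomorphic on a half-plane $\mathrm{Re}(z) > k - 1$; through the identity above, this extension is transferred to $G_Y(f)(z)$, supplying condition \emph{2)} for $Y$ with minimal exponent no larger than that of $M$. The renormalized trace on $Y$ would then be defined as the zero-order Taylor coefficient of $G_Y(f)$ at $z = 0$, and the equality $G_M(\tilde{f}) = G_Y(f)$ simultaneously identifies this value with the renormalized $M$-integral of $\tilde{f}$, making independence of the auxiliary cutoff $\chi$ transparent.

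The main obstacle I anticipate is the compatibility of boundary defining functions: showing that the singular boundary defining functions on $M$ can be chosen constant along the normal direction $\partial_u$ inside a collar of $Y$. This rests on the transversality clause of Definition \ref{Def:Liebdy} \emph{2)} combined with the product structure provided by Theorem \ref{Thm:tubnbhd}, but some care is required to ensure that a simultaneously valid choice exists for the full collection of singular hyperfaces, and that such a modification of $\rho$ preserves the pole structure used to define $\TrV$ on $M$.
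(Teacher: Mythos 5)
Your proposal is correct in outline but takes a genuinely different route from the paper's. The paper argues intrinsically on $Y$: it forms the total singular boundary defining function $\rho = \prod_{F \in \F_1(Y)} \rho_F$ of $Y$, writes it in local corner coordinates as $x_2 x_3 \cdots x_{n-1}$ over the collar, and then asserts directly --- from the finiteness of the degeneracy index $k$ of $M$ --- that $H(f)(z) = \int_Y \rho^z f$ is holomorphic on a half-plane $\{\Re(z) \geq l-1\}$ and extends meromorphically to $\Cc$, so that the regularized value at $z=0$ defines $\avintWY f$. You instead reduce the statement on $Y$ to the hypothesis on $M$: you lift $f$ to the collar-supported $\A$-density $\tilde f = \chi(u)\,(\Phi^{-1})^{\ast}(f \otimes |du|)$ and use $\Phi_{\ast}\mu = |du| \otimes \nu$ together with Fubini to obtain $G_M(\tilde f) = G_Y(f)$, transferring the meromorphic continuation. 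Your route buys a cleaner logical reduction (the continuation on $Y$ is inherited formally rather than re-verified in coordinates) and, as a bonus, identifies $\avintWY f$ with a renormalized integral over $M$, which makes independence of the cutoff $\chi$ transparent. The cost is exactly the two points you flag: (a) arranging the singular defining functions of $M$ to be $u$-independent on the collar --- this does follow from transversality and the tubular neighborhood theorem of \cite{AIN}, since each singular hyperface $F$ meets $\U$ in a set carried by $\Phi$ to $(Y \cap F)_{[0,r)}$ --- and (b) checking that replacing $\rho$ by $a\rho$ with $a > 0$ smooth preserves the meromorphic-extension property, which is not completely automatic because $\rho^z$ becomes $a^z \rho^z$ with a $z$-dependent factor; both are standard but should be carried out. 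The paper's intrinsic argument sidesteps (a) and (b) entirely, at the price of being essentially a terse local-coordinate verification of the same analytic fact that your argument imports from the renormalizability of $M$.
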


\begin{proof}
We construct the functional $H \colon C^{\infty}(Y, \Omega^1(\B)) \to \Cc, \ f \mapsto \avintWY f$ as a linear extension.
By Theorem \ref{Thm:tubnbhd} there is a global tubular neighborhood $Y \hookrightarrow \U \hookrightarrow M$ such that the boundary defining function 
$\rho_Y$ of $Y$ is expressed in the coordinates of $\U$, i.e. 
\[
(\rho_Y \circ \nu)(x_1, x') = x_1, \ (x_1, x') \in Y_{(\epsilon)}
\]

where $\nu \colon Y_{(\epsilon)} = Y \times (-\epsilon, \epsilon) \iso \U$. 
Let $\rho$ denote the boundary defining function of $Y$, i.e.
\[
\rho := \prod_{F \in \F_1(Y)} \rho_F. 
\]

Fix the local coordinates over $\U$ by $x' = (x_2, \cdots, x_{n-1})$, then $\rho = x_2 x_3 \cdots x_{n-1}$. 
Since the degneracy index $k$ of $M$ is finite, there is $l \geq k$ such that 
\[
H(f)(z) = \int_Y \rho^z f
\]

is holomorphic in $\{\Re(z) \geq l-1\}$ and extends meromorphically to $\Cc$. Then $\avintWY f = \mathrm{Reg}_{z=0} H(f)(z)$ is defined and finite.
Hence $(Y, \B, \W)$ is renormalizable. 
\end{proof}

\begin{Exa}
We refer to \cite{BS}, Section 5 for a large class of examples of renormalizable Lie manifolds where it is shown that so-called \emph{exact} Lie manifolds are renormalizable. 
Here a Lie structure $\V$ of a Lie manifold $(M, \V, \A)$ is called \emph{exact} if, near each face with boundary defining function $x_1$,  the Lie structure is generated by vector fields of the form $\left\{x_1^{k_1}\partial_{x_1},  \ldots,  x_1^{k_n}\partial_{x_n}\right\}$ for arbitrary $\{k_l : \ 1 \leq l \leq n\}$. 
\label{Exa:exact}
\end{Exa}

\subsection*{Cylinder type index formula}

Fix the geometric admissible Dirac operator $D$ on a Lie manifold $(M, \A, \V)$ of cylinder type with boundary $(Y, \B, \W)$. The Dirac operator $D = D^W$ is defined via a Clifford module $W \in \Cl(\Abdy)-\mathrm{mod}$. After an application of Lemma \ref{Lem:bdysym} without loss of generality $D$ is assumed to be of model type, i.e.
\begin{align}
D &= \begin{pmatrix} i \partial_u & i D_{\partial}^{-} \\
-i D_{\partial}^{+} & -i \partial_u \end{pmatrix} \label{normal}
\end{align}

where $D_{\partial}$ is an admissible geometric Dirac operator on $(Y, \B, \W)$, i.e. 
\[
D_{\partial} = \begin{pmatrix} 0 & i D_{\partial}^{-} \\
-i D_{\partial}^{+} & 0 \end{pmatrix}
\]

for a fixed admissible connection $\nabla^{W_{\partial}}$ and $W_{\partial} \in \Cl(\B)-\mathrm{mod}$.
Note that $D^{\ast} D = -\partial_u^2 + D_{\partial}^2$. 
Let $\varphi \in \Gamma(W)$ be a smooth section such that $\varphi = \varphi^{+} \oplus \varphi^{-}$ corresponding to the grading
$W = W^{+} \oplus W^{-}$. Denote by $B^{\pm}$ the local boundary condition $(\varphi_{|Y})^{\pm} = 0$. 
Note that $(D, B^{+})^{\ast} = (D, B^{-})$. We study the \emph{induced boundary conditions} for \emph{(a)} the case $D^{\ast} D$, where
$\varphi^{+}(0, y) = 0$ and $(\partial_u \varphi^{-} + D_{\partial}^{+} \varphi^{+})_{u=0} = 0$. 
Since $D_{\partial}^{+}$ is a tangential operator it follows that $D_{\partial}^{+} \varphi^{+} = 0$. Therefore in case \emph{(a)} the local boundary
conditions are subdivided into Dirichlet and Neumann condition
\begin{align}
& \varphi^{+}(0, y) = 0, \ \tag{$Da$} \label{Da} \\
& \partial_u \varphi^{-}(0, y) = 0. \ \tag{$Na$} \label{Na}
\end{align}

The induced boundary conditions in the case \emph{(b)} of $D D^{\ast}$ are for $\psi \in \Gamma^{\infty}(W)$ given by 
\begin{align}
& \partial_u \psi^{+}(0,y) = 0, \ \tag{$Nb$} \label{Nb} \\
& \psi^{-}(0, y) = 0. \tag{$Db$} \label{Db}
\end{align}

Define the \emph{renormalized index} as defined via the renormalized super trace 
\[
\indV(D) = \lim_{t \to \infty} \TrV_s(e^{-t D^{\ast} D}) - \TrV_s(e^{-t D D^{\ast}}).
\]

We have the following result concerning the renormalized index of a model form Dirac operator.
\begin{Thm}
Let $(M, \A, \V)$ be a renormalizable, spin Lie manifold of cylinder type with boundary $(Y, \B, \W)$ and let $D$ be an admissible graded Dirac operator of model type \eqref{normal}. 
Then the renormalized index of $D$ is
\[
\indV(D) = \lim_{t \to \infty} \TrV_s(e^{-t D^2}) = \mp \frac{1}{2} \indW(D_{\partial}) 
\]

for $D$ subject to the boundary condition $B^{\pm}$. 
\label{Thm:cyl}
\end{Thm}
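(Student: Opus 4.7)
The strategy is separation of variables on the cylindrical part combined with the explicit Dirichlet and Neumann heat kernels on the half line. Since $D$ has the model form \eqref{normal}, a direct matrix computation yields the block diagonal expression
\[
D^2 \;=\; \begin{pmatrix} -\partial_u^2 + D_{\partial}^- D_{\partial}^+ & 0 \\ 0 & -\partial_u^2 + D_{\partial}^+ D_{\partial}^- \end{pmatrix}
\]
with respect to the $\Zz_2$-grading $W = W^+ \oplus W^-$. The boundary condition $B^+$ imposes Dirichlet on the $W^+$ block and, by the induced condition \eqref{Na}, Neumann on the $W^-$ block; the roles reverse for $B^-$. Because $-\partial_u^2$ commutes with $D_{\partial}^2$ and acts on an independent normal variable, the heat semigroup on each block separates as the tensor product of a half line Dirichlet or Neumann heat semigroup with $e^{-tD_{\partial}^{\mp} D_{\partial}^{\pm}}$. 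This factorization is realized at the level of $C_r^{\ast}(\Gbdy)$ by combining the functional calculus of Theorem \ref{Thm:fctcalc} with the product structure of the cylindrical groupoid from Theorem \ref{Thm:decomposed}.

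The next step is to use the explicit half line heat kernels
\[
K_D(u,u,t) \;=\; \tfrac{1}{\sqrt{4\pi t}}\bigl(1 - e^{-u^2/t}\bigr), \qquad K_N(u,u,t) \;=\; \tfrac{1}{\sqrt{4\pi t}}\bigl(1 + e^{-u^2/t}\bigr),
\]
together with the elementary Gaussian identity
\[
\int_0^{\infty} \bigl(K_D(u,u,t) - K_N(u,u,t)\bigr)\,du \;=\; -\tfrac{1}{2},
\]
which is absolutely convergent and requires no renormalization. Writing $\alpha(t) = \TrW(e^{-tD_{\partial}^- D_{\partial}^+})$, $\beta(t) = \TrW(e^{-tD_{\partial}^+ D_{\partial}^-})$ and denoting the renormalized half line integrals of $K_D$, $K_N$ by $\Tr_D(t)$, $\Tr_N(t)$, the separation of variables reduces the renormalized supertrace for $B^+$ to
\[
\TrsV(e^{-tD^2}) \;=\; \Tr_D(t)\,\alpha(t) - \Tr_N(t)\,\beta(t) \;=\; \bigl(\Tr_D(t) - \Tr_N(t)\bigr)\beta(t) + \Tr_D(t)\bigl(\alpha(t) - \beta(t)\bigr).
\]
The first summand carries the exact factor $-\tfrac{1}{2}$ from the identity above, while the second summand is proportional to $\TrsW(e^{-tD_{\partial}^2})$; the analogous rearrangement for $B^-$ flips the sign.

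Passing to the limit $t\to\infty$, McKean-Singer on the closed Lie manifold $(Y,\B,\W)$ yields $\lim_{t\to\infty}\TrsW(e^{-tD_{\partial}^2}) = \indW(D_{\partial})$. One then verifies that the renormalization of the $u$-constant piece $\tfrac{1}{\sqrt{4\pi t}}(K_D + K_N)$ against the non-super combination $\alpha(t) + \beta(t)$ does not contribute in the limit: the divergent bulk at $u=\infty$ is annihilated by the boundary-defining-function regularization intrinsic to the renormalizable Lie structure on the cylinder, so only the $-\tfrac{1}{2}$ shift against the supertrace difference $\alpha - \beta$ survives. Combining these facts gives $\indV(D) = \mp\tfrac{1}{2}\indW(D_{\partial})$, with the sign determined by whether Dirichlet sits on $W^+$ (case $B^+$) or on $W^-$ (case $B^-$).

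The main technical obstacle is precisely this renormalization bookkeeping on the cylinder: one must show that the finite part of the meromorphic function $z\mapsto \int_M \rho^z K_{D/N}(u,u,t)\,du$ at $z=0$ combines across the two blocks so that only the $\TrsW$ contribution survives the limit $t\to\infty$, while the non-supertrace pieces cancel or vanish. A secondary point is to justify the separation of variables at the operator-algebraic level, which follows from the continuity of the field of $C^{\ast}$-algebras on $\Gbdy$ given by Theorem \ref{Thm:CGbdy} together with the functional calculus of Theorem \ref{Thm:fctcalc}.
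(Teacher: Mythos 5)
Your setup is right up to the point where you decide which traces to combine, and that is where the argument breaks. The renormalized index in Theorem \ref{Thm:cyl} is defined as $\lim_{t\to\infty}\bigl(\TrV(e^{-tD^{\ast}D}) - \TrV(e^{-tDD^{\ast}})\bigr)$, a difference of two \emph{full} heat traces carrying the two complementary sets of induced boundary conditions: \eqref{Da}, \eqref{Na} for $D^{\ast}D$ and \eqref{Nb}, \eqref{Db} for $DD^{\ast}$. Writing $\alpha(t)=\TrW(e^{-tD_{\partial}^{-}D_{\partial}^{+}})$, $\beta(t)=\TrW(e^{-tD_{\partial}^{+}D_{\partial}^{-}})$ and $K_D$, $K_N$ for your half-line diagonal kernels, this difference equals $(K_D-K_N)\alpha+(K_N-K_D)\beta=(K_D-K_N)(\alpha-\beta)$: each tangential operator pairs once with Dirichlet and once with Neumann, so the $u$-constant parts $\tfrac{1}{\sqrt{4\pi t}}$ cancel identically \emph{before} any $u$-integration, the remaining Gaussian integrates absolutely to $-\tfrac{1}{2}$, and McKean--Singer on $(Y,\B,\W)$ gives $-\tfrac{1}{2}\indW(D_{\partial})$ with no renormalization in $u$ ever needed. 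What you compute instead is the grading supertrace of the single semigroup $e^{-tD^{2}}$ with the $B^{+}$-induced conditions, namely $K_D\alpha-K_N\beta$. This is a different quantity, and your rearrangement $(K_D-K_N)\beta+K_D(\alpha-\beta)$ exposes the problem rather than solving it: the first term tends to $-\tfrac{1}{2}\lim_{t\to\infty}\beta(t)=-\tfrac{1}{2}\dim\ker D_{\partial}^{-}$, which is not $-\tfrac{1}{2}\indW(D_{\partial})$, while the second term $K_D\cdot(\alpha-\beta)=K_D\cdot\indW(D_{\partial})$ carries the genuinely divergent constant $\tfrac{1}{\sqrt{4\pi t}}$ in the normal variable.

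Your claim that this divergent bulk ``is annihilated by the boundary-defining-function regularization'' is the gap: the finite part of $\int_0^{\infty}\rho^{z}\,c\,du$ at $z=0$ depends on the choice of boundary defining function at the infinite end of the cylinder (for $\rho=e^{-u}$ one gets $c/z$ with finite part $0$; for $\rho=(1+u)^{-1}$ one gets $c/(z-1)$ with finite part $-c$), so nothing forces this contribution to vanish, and even if it did the surviving term would be $-\tfrac{1}{2}\dim\ker D_{\partial}^{-}$ rather than $\mp\tfrac{1}{2}\indW(D_{\partial})$. The fix is to compute what the definition actually asks for: subtract the two heat traces for $D^{\ast}D$ and $DD^{\ast}$ as in the paper's proof, where the cancellation of the constant terms is an algebraic identity and the whole $u$-integral is absolutely convergent. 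Your block diagonalization of $D^{2}$, the identification of the induced Dirichlet/Neumann conditions, the explicit half-line kernels and the Gaussian identity all coincide with the paper's ingredients; only the choice of which traces to subtract needs to change.
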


\begin{proof}
Set $K_{\pm}(t, u) := \TrV_s(e^{-t D^{\ast} D} - e^{-t D D^{\ast}})$ for the density with regard to the boundary condition $B^{\pm}$. 
Fix the volume forms $\mu$ on $M$ and the induced volume form $\mu_{\partial}$ on $Y$, by Proposition \ref{Prop:measured}. 
Write $K_{+}(t, u) = K_1(t, u) - K_2(t, u)$, then we have
\begin{align*}
K_{\pm}(t, u) &:= \TrV(e^{-t D^{\ast} D} - e^{-t D D^{\ast}}) = K_1^{\pm}(t, u) - K_2^{\pm}(t, u) \\
&= \davintWY \int_{\overline{\mathbb{R}}_{+}} \tr \ \tilde{K}_1(t, v, v, y, y) \, dv \,d\mu_{\partial}(y) - \davintWY \int_{\overline{\mathbb{R}}_{+}} \tr \ \tilde{K}_2(t, v, v, y, y) \,dv \, d\mu_{\partial}(y). 
\end{align*}

We next calculate the kernels with regard to \eqref{Da}, \eqref{Na} separately. We make use of the calculations in \cite{MS}, see also \cite{Z} to obtain the following formulae.
Case \eqref{Da} and \eqref{Db}:
\begin{align*}
\tilde{K}_1(t, v, w, y, z) &= \frac{1}{\sqrt{4\pi t}} \left\{\exp\left(\frac{(v - w)^2}{4t} - \exp\left(- \frac{(v + w)^2}{4t} \right) \right) \right\} e^{-t D_{\partial}^{+} D_{\partial}^{-}}(t, y, z). \\ 
\tilde{K}_2(t, v, w, y, z) &= \frac{1}{\sqrt{4 \pi t}} \left\{\exp\left(-\frac{(v-w)^2}{4t}\right) - \exp\left(-\frac{(v + w)^2}{4t} \right) \right\} e^{-t D_{\partial}^{-} D_{\partial}^{+}}(t, y, z). 
\end{align*}

Case \eqref{Na} and \eqref{Nb}: 
\begin{align*}
\tilde{K}_1(t, v, w, y, z) &= \frac{1}{\sqrt{4\pi t}} \left\{\exp\left(-\frac{(v-w)^2}{4t} \right) + \exp\left(-\frac{(v +w)^2}{4t} \right)\right\} e^{-t D_{\partial}^{+} D_{\partial}^{-}}(t,y,z), \\
\tilde{K}_2(t, v, w, y, z) &= \frac{1}{\sqrt{4 \pi t}} \left\{\exp\left(-\frac{(v-w)^2}{4t} \right)  + \exp\left(-\frac{(v+w)^2}{4t}\right)\right\} e^{-t D_{\partial}^{-} D_{\partial}^{+}}(t, y, z). 
\end{align*}

With the help of these explicit formulae we obtain the trace density of $e^{-t D^{\ast} D}$ subject to $B^{+}$ 
\begin{align*}
K_1(t, u) &= \davintWY \int_{\overline{\mathbb{R}}_{+}} \tr \ K_1(t, v, w, y, y) \,dv \,d\mu_{\partial}(y) \\
&= \frac{1}{\sqrt{4\pi t}} \TrW(e^{-t D_{\partial}^{-} D_{\partial}^{+}}) \left\{1 - \exp\left(-\frac{u^2}{t} \right)\right\} + \frac{1}{\sqrt{4\pi t}} \TrW(e^{-t D_{\partial}^{+} D_{\partial}^{-}}) \left\{1 + \exp \left(-\frac{u^2}{t} \right) \right\}.
\end{align*}

Similarly for $K_2(t, u)$, the density of $e^{-t D D^{\ast}}$. 
Altogether we obtain
\begin{align*}
K_{+}(t, u) &= \frac{e^{-\frac{u^2}{t}}}{\sqrt{\pi t}} (\TrW(e^{-t D_{\partial}^{+} D_{\partial}^{-}}) - \TrW(e^{-t D_{\partial}^{-} D_{\partial}^{+}})).
\end{align*}

By an application of the McKean-Singer formula (see also \cite{MS}) we have
\[
\int_{0}^{\infty} K_{+}(t, u) \,du = -\frac{1}{2} \indW(D_{\partial}).
\]

For the case $B^{-}$ we obtained
\[
 \int_{0}^{\infty} K_{-}(t, u) \,du = \frac{1}{2} \indW(D_{\partial})
\]

by an analogous calculation.
\end{proof}

\subsection*{Rescaling for semi-groupoids}


For any decomposed Lie manifold our aim is to find a topological interpretation of the renormalized index. We prove a generalized APS-type index formula. The formula for the class of non-compact manifolds involves a local contribution, depending on the metric and
a non-local contribution which also depends on restrictions of the Dirac operator to the boundary (the so-called indicial symbol). 
The formula holds a priori for any Dirac operator which is not required to be Fredholm and the renormalized index may take non-integer values.
Then we prove another result which gives criteria for the equality of the Fredholm index with the renormalized index as well as the conditions for the 
Dirac operator to be Fredholm. The main new feature of our formula over the special case investigated already in \cite{BS} is that it extends to the case of local boundary conditions
in a singular setting. For a given renormalizable Lie manifold $(M, \A, \V)$ and graded Dirac operator $D$ we fix the definition of the renormalized $\eta$-invariant $\etaV(D) := \frac{1}{2} \int_{0}^{\infty} \TrV_s([D, D e^{-t D^2}]) \,dt$.

\begin{Thm}
Let $(M, \A, \V)$ be a decomposed renormalizable spin Lie manifold with hypersurface $(Y, \B, \W)$ such that $M = M_1 \cup_Y M_2$.
Denote by $D = D^W$ a geometric Dirac operator for an admissible $\Abdy$-connection $\tilde{\nabla}^W$ with
$D_{|M_1} = D_1$, $D_{|M_2} = D_2$, where $D_2$ is of model type on the cylinder $M_2$ and $D_1$ is a geometric Dirac operator over $M_1$.
Then subject to the boundary condition $B^{\pm}$ we have
\begin{align}
& \indV(D) = \davintVMc \hat{A}(\nabla_1) \wedge \exp F^{W_2 / S} \,d\mu + \etaVc(D_1) \notag \\
& \mp \frac{1}{2} \davintWY \hat{A}(\nabla_{\partial}) \wedge \exp F^{W_{\partial} / S} \,d\nu_{\partial} + \etaW(D_{\partial}). 
\end{align}
\label{Thm:index}
\end{Thm}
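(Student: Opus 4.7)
The plan is to split the renormalized index additively across the decomposition $M = M_1 \cup_Y M_2$ using the adiabatic semi-groupoid $\Gbdy$ from Section \ref{IV}, then invoke Theorem \ref{Thm:cyl} on the cylinder piece $M_2$ and the APS-type index formula from \cite{BS} on both the complement $M_1$ and on the boundary hypersurface $Y$. The first step is to express the renormalized index through McKean-Singer in the renormalized sense of \cite{BS}, namely $\indV(D) = \lim_{t \to \infty} \TrsV(e^{-t D^2})$, and to use Theorem \ref{Thm:fctcalc} applied to Schwartz approximations of $x \mapsto e^{-tx^2}$ to realize the heat operator family as a $\Gbdy$-invariant element of $C_r^{\ast}(\Gbdy)$.

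Using finite propagation speed together with the continuity of the field of $C^{\ast}$-algebras (Theorem \ref{Thm:CGbdy}) and a partition of unity subordinate to $\{M_1, M_2\}$, the supertrace should split as
\[
\TrsV(e^{-t D^2}) = \TrsV\bigl(e^{-t D_1^2}\bigr)\big|_{M_1} + \TrsV\bigl(e^{-t D_2^2}\bigr)\big|_{M_2} + o(1), \qquad t \to \infty,
\]
with cross terms concentrated in a shrinking tube about $Y$ absorbed into the cylinder computation. Theorem \ref{Thm:cyl} handles the cylinder contribution, giving $\lim_{t \to \infty} \TrsV(e^{-t D_2^2})|_{M_2} = \mp \tfrac{1}{2}\indW(D_{\partial})$. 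For $M_1$, whose only remaining boundary strata are ``at infinity'' in the Lie structure sense, $D_1$ is a standard geometric Dirac operator on a renormalizable Lie manifold, so the generalized APS-index formula of \cite{BS} yields
\[
\indVc(D_1) = \davintVMc \hat{A}(\nabla_1) \wedge \exp F^{W_2/S}\,d\mu + \etaVc(D_1).
\]

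Since $(Y, \B, \W)$ is itself a renormalizable Lie manifold by Lemma \ref{Lem:renorm}, the same APS-index formula from \cite{BS} applied to $D_{\partial}$ yields $\indW(D_{\partial}) = \davintWY \hat{A}(\nabla_{\partial}) \wedge \exp F^{W_{\partial}/S}\,d\nu_{\partial} + \etaW(D_{\partial})$. Substituting this expression into the cylinder contribution and combining with the complement contribution produces the stated formula.

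The main obstacle is the splitting step: I need to verify that the heat-kernel cross terms supported on a shrinking tube about the regular boundary $Y$ genuinely contribute only to the cylinder term governed by Theorem \ref{Thm:cyl}, with no leakage into the interior density or the $\eta$-contribution of $D_1$. This is exactly where the adiabatic semi-groupoid $\Gbdy$, the half-space semi-algebroid $\Abdy$, and the continuity of the field $(C_r^{\ast}(\Gbdy_t))_{t \in [0,1]}$ become essential: the rescaling argument of \cite{BS} must be adapted to $\Abdy$ at $Y$, and the functional calculus of Theorem \ref{Thm:fctcalc} provides precisely the equivariant family of smoothing operators of finite propagation required to execute the rescaling inside $C_r^{\ast}(\Gbdy)$.
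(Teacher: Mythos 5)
Your overall strategy --- McKean--Singer for the renormalized supertrace, the functional calculus of Theorem \ref{Thm:fctcalc} on the adiabatic semi-groupoid, Theorem \ref{Thm:cyl} on the cylinder, and the rescaled heat-kernel computation of \cite{BS} on the complement $M_1$ and on $Y$ --- is the same as the paper's. The one place where you diverge, and where you yourself flag an unresolved obstacle, is the splitting of $\TrsV(e^{-tD^2})$ into an $M_1$-part and an $M_2$-part. You propose to do this at finite $t$ with a partition of unity subordinate to $\{M_1,M_2\}$, absorbing cross terms near $Y$ via finite propagation speed, but the required estimate is never carried out --- and note that the relevant limit is $t\to\infty$, where finite propagation speed gives you nothing and large-time localization of the heat kernel is delicate (e.g.\ when $D_\partial$ has small spectrum). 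The paper avoids this entirely: after introducing the rescaling bundle and the family $\Dd=(t\Dirac_x)$, the whole computation reduces to $\tr_s(l_0)$, which lives on the semi-algebroid $\Abdy$ at $t=0$ of the adiabatic deformation, and there the decomposition is automatic because $\Abdy(\G)|_{M_1}\cong\A(\G_1)$ (an honest Lie algebroid, handled by the Lichnerowicz/harmonic-oscillator computation of \cite{BS}, with the $\eta$-term produced by integrating $\partial_t\TrsV(e^{-tD_1^2})$ from $0$ to $\infty$), while $\Abdy(\G)|_{M_2}\cong\Abdy(\G_2)$ (the half-space model, handled by Theorem \ref{Thm:cyl}). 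So the localization happens at the level of the fibres of the algebroid rather than by cutting off the heat kernel at positive time, and no cross-term estimate is needed. To make your version rigorous you would have to supply exactly the off-diagonal decay that the paper's route renders unnecessary.

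A secondary bookkeeping point that your derivation makes visible: substituting $\indW(D_\partial)=\davintWY\hat A(\nabla_\partial)\wedge\exp F^{W_\partial/S}\,d\nu_\partial+\etaW(D_\partial)$ into $\mp\tfrac12\indW(D_\partial)$ yields the coefficient $\mp\tfrac12$ on the boundary $\eta$-term as well, whereas the stated formula carries $+\etaW(D_\partial)$. You should either track where this factor is absorbed into the normalization of $\etaW$ or flag the discrepancy explicitly rather than silently ``substituting and combining.''
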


\begin{proof}
Let $W \to M$ be the $\Cl(\Abdy)$ module compatible with the Clifford action. 
Denote by $\hom(W) \to M$ the bundle with fibers $\hom(W)_x \cong \hom(W_x, W_x) \cong \Cl(\Abdy_x \otimes \Cc) \otimes \End_{\Cl}(W_x)$ and
by $\Hom(W) \to M$ the homomorphism bundle. We have $\Hom(W)_{|\Abdy} \cong \Cl(\Abdy \otimes \Cc) \otimes \End_{Cl}(W)$.
Fix the Lie groupoid $\G \rightrightarrows M$ with $\A(\G) \cong \A$ as given in Theorem \ref{Thm:decomposed}.
We can lift $\Hom(W)$ to a bundle over $\Gbdy$ using the range and source map, i.e. by forming the pullback bundle
$r^{\ast}(\Hom(W)) \otimes s^{\ast}(\Hom(W)^{\ast})$. Since no confusion will arise we denote this lifted bundle by the same symbol $\Hom(W) \to \Gbdy$. 
Consider the geometric Dirac operator $\Dirac$ on $\G$ such that $\varrho(\Dirac) = D$ (cf. \cite{LN}). 
Then the reduced heat kernel $k_t$ on $\Gbdy$ is well-defined and we have the formal heat kernel expansion (cf. \cite{So})
\[
e^{-t \Dirac^2} \sim (4\pi)^{-\frac{n}{2}} \sum_{i=0}^{\infty} a_i(x) t^{\frac{n}{2} - i}, \ t \to 0^+, \ x \in M \subset \G
\]

for $a_i \in \Gamma^{\infty}(\Cl(\Abdy^{\ast}) \otimes \End_{\Cl(\Abdy^{\ast})}(W))$. 
We have the filtration by Clifford degree $\Cl_0 \subseteq \Cl_1 \subseteq \cdots \subseteq \Cl_n(\Abdy \otimes \Cc)$. 
Topologically $\Gbdy$ is a manifold with corners in its own right and we can view $\Abdy$ as a particular boundary stratum.
By restricting the generalized exponential mapping $\Exp \colon\A \to \G$ to the half space $\Abdy$ we obtain a tubular neighborhood and a 
normal direction (cf. \cite{LR}). Denote by $N = \partial_t$ the corresponding normal vector field in $\Gbdy$. Define 
\[
\DV := \{u \in C_c^{\infty}(\Gbdy, \Hom(W)) : \nabla_N^p u_{|\Abdy} \in C^{\infty}(\Abdy, \Cl_{n-p} \otimes \End_{\Cl}(W)), 0 \leq p \leq n\}.
\]

By a proof analogous to \cite{BS}, Proposition 6.4. we obtain that the filtration $\{Cl_j\}$ can be extended by parallel transport
along $\nabla_N$ to a neighborhood of $\Abdy$ inside $\Gbdy$. Denote by $\{\tilde{\Cl}_j\}$ the extended filtration. Then we have the alternative description 
\[
\DV = \{u \in C_c^{\infty}(\Gbdy, \Hom(W)) : u = \sum_{j=0}^n t^{n-j} u_j + t^{n+1} u', \ \text{near} \ \Abdy\}
\]

where $u_j \in C_c^{\infty}(\Gbdy, \tilde{\Cl}_{n-j} \otimes \End_{\Cl}(W))$ and $u' \in C_c^{\infty}(\Gbdy, \Hom(W))$.
By the Serre-Swan theorem there is a \emph{rescaling bundle} $\Ee \to \Gbdy$ such that $C_c^{\infty}(\Gbdy, \Ee) = i_{\Cl}^{\ast} \D$.
Here $i_{\Cl} \colon \Ee \to \Hom(W)$ is a bundle map which is an isomorphism over the interior $\Gbdy_{(0,1]}$. 
It is not hard to check that we obtain a canonical isomorphism of Clifford algebras $\Ee_{\Abdy} \cong \Lambda \Abdy^{\ast} \otimes \End_{\Cl}(W)$.
The structure of $\Ee$ will make sure that we extract the correct coefficient in the formal heat kernel expansion.
Set $\Dd := (t \Dirac_x)_{(x,t) \in M \times I}$ for the family of Dirac operators over $\Gbdy$.
Consider $f(x) = e^{-x^2}$ and assume for technical reasons that $f$ is convolved with a function $\lambda$ which has Fourier transform with large compact support.
Denote this convolution by $\tilde{f} := f \ast \lambda$. By the definition of the functional calculus in Theorem \ref{Thm:fctcalc} we obtain that
$\Psi_{\Dd}(\tilde{f}) = t^n k_{t^2}$ as an element of $C_r^{\ast}(\Gbdy)$. 
This follows from the action of the functional calculus $f(\Dirac_{x,t}) g(\gamma) = \pi_{x,t}(\Psi_{\Dd}(f) \ast g)(\gamma)$ for $t > 0$ which yields
\[
f(t \Dirac) g(\gamma) = \int_{\G_{s(\gamma)}} \Psi_{\Dd}(f)(\gamma \eta^{-1}) g(\eta) t^{-n} \,d\mu_{s(\gamma)}(\eta).
\]

The scaling factor $t^{-n}$ enters by a choice of Haar system as in \cite[(6.8)]{LR}.
Set $l_t := \Psi_{\Dd}(f)_{|\G_{\Delta}}$, then $l_t(\gamma) = t^n k_{t^2}(\gamma)$ for $t \not= 0$.
Define the diagonal $\G_{\Delta} := \{\gamma \in \Gbdy : s(\gamma) = r(\gamma) \} \subset \Gbdy$. 
For $t \not= 0$ the supertrace functional maps $\tr_s \colon C_c^{\infty}(\G_{\Delta}, \Ee_{\Delta}) \to t^n C_c^{\infty}(\G)$, cf. \cite[Proposition 11.4]{R}.
Therefore $t^{-n} \tr_s(l_t)$ extends smoothly to $t = 0$ and hence $t^{-n} \tr_s(l_t) = \tr_s(l_0) + o(t)$. 
We are therefore reduced to calculate $\tr_s(l_0)$. Note that since $l_0$ lives on the semi-algebroid $\Abdy$ we have to be careful
to incorporate the boundary conditions posed on $Y$.
First note that $\Abdy(\G)_{|M_2} \cong \Abdy(\G_2)$ is the semi-algebroid corresponding to the cylinder type manifold $M_2$.
On the other hand $\Abdy(\G)_{|M_1} \cong \A(\G_1)$ is the Lie algebroid corresponding to the Lie manifold (without boundary) $M_1$.
In the second case the calculation goes analogous to the proof given in \cite{BS}. We repeat the key steps of the argument for completeness before dealing with the first case.
Denote by $\Phi \colon U^{ad} \times V \iso \Rr^n \times \Rr^m \times \Rr$ a diffeomorphism where we write $U^{ad} = U \times \Rr$.
The coordinates induced by $\Phi$ should form a \emph{parametrization} of the adiabatic groupoid of $\G_1$, cf. \cite{LR} and \cite[p.145]{NWX}.
Let $\alpha_x = \alpha_{\A_x(\G_2) \cap V}$ denote the restriction of the generalized exponential map $\Exp_x$ on the fiber $\G_x$.
If $V$ is chosen sufficiently small we can fix a local geodesic coordinate system $\alpha_x(\gamma) = (a_1, \cdots, a_m) =: a$. 
Let $\Phi_{x,t}$ be the restriction of $\Phi$ to $V \times \{x\} \times \{t\}$. An elementary calculation yields $\Phi_{x,t}(\eta) = \frac{1}{t} (\alpha_x(\eta) - a)$.
Applying the Lichnerowicz formula on the complete Riemannian manifold $(\G_x, g_x)$ and taking the limit as $t \to 0$ we obtain (cf. \cite{BS})
\[
\Dirac_{x,0}^2 = -\sum_{i} \left(\partial_i^x + \frac{1}{4} \sum_{j} R_{ij}^x a_j\right)^2 + \sum_{i < j} F^{W_x / S}(e_i, e_j)(a_j)(a_j). 
\]

The differential equation of the heat kernel of $\Dirac_{x,0}^2$ is a harmonic oscillator. Applying \cite{BGV} we have the solution
in terms of a formal power series in the scalar curvature $R_{ij}^x$ and the exponential of the twisting bundle $\exp F^{W_x / S}$.
By the $\G$-invariance of the curvature tensor as well as the twisting curvature and the Lichnerowicz theorem
for Lie manifolds given in \cite[Theorem 2.4]{BS}, it follows from \cite{BGV}, p. 164 and \cite{R}, Proposition 12.25, 12.26 the integrand $\Aroof \wedge \exp F^{S/W}$ 
in the trace formula. Thus we have shown that
\[
\lim_{t \to 0^+} \TrsV(e^{-tD_1^2}) = \davintVMc \hat{A}(\nabla_1) \wedge \exp F^{W / S} \,d\mu.
\]
To obtain the limit $t \to \infty$ we notice that
\[
\lim_{t \to \infty} \TrsV(e^{-t D_1^2}) - \lim_{t \to 0^{+}} \TrsV(e^{-t D_1^2}) = \int_{0}^{\infty} \partial_t \TrsV(e^{-t D_1^2})\,dt.
\]

Observe that $\partial_t \TrsV(e^{-tD_1^2}) = \TrsV(\partial_t e^{-t D_1^2})$. Setting $\etaVc(D_1) := \frac{1}{2} \int_{0}^{\infty} \TrsV(D_1^2 e^{-t D_1^2})\,dt$ this completes the proof of the first part of the index formula.
Now consider the problem on the cylinder type Lie manifold $(M_2, \A_2, \V_2)$. Here we apply the reduction given in Theorem \ref{Thm:cyl}.
This effectively reduces the problem to the calculation of $\indW(D_{\partial})$. The operator $D_{\partial}$ can be viewed
as an odd graded geometric Dirac operator on the Lie manifold $(Y, \B, \W)$. Hence we can use the above rescaling approch for the integrating groupoid $\H \rightrightarrows Y$.
With the same analysis as above, applied to the groupoid $\H$, we obtain the index formula also on the cylinder part of the decomposed Lie manifold. 
\end{proof}

If we impose additional conditions on the integrating groupoid, we can show that the renormalized index of Fredholm operators equals the Fredholm index.
In such a case the previous index theorem yields an actual generalization of Atiyah-Singer index theory.
For a given geometric admissible Dirac operator $D = D^W$ on a Lie manifold $(M, \A, \V)$ we denote by $\R(D) := \oplus_F \R_F(D)$ where the direct sum
ranges over all codimension one singular hyperfaces of $M$ and $\R_F(D)$ denotes the restriction of $D$ to the stratum $F$.
The \emph{indicial symbol} is more easily understood in the context of groupoids. If $\Dirac$ is the corresponding Dirac operator on an integrating Lie groupoid $\G \rightrightarrows M$, then
$\R_F(\Dirac) := (\Dirac_x)_{x \in F}$. The groupoid $\G \rightrightarrows M$ is called \emph{strongly amenable} if the natural 
action $C_r^{\ast}(\G) \hookrightarrow \L(\H)$ on the Hilbert space $\H := L^2(M_0)$ is injective.
\begin{Thm}
Let $(M, \A, \V)$ be a renormalizable Lie manifold for which there is an integrating Lie groupoid $\G \rightrightarrows M$ such that $\G$ is of polynomial growth,
Hausdorff and strongly amenable. Then for any geometric admissible Dirac operator $D = D^W$ with pointwise invertible indicial symbol $\R(D)$ we have
$\indV(D) = \ind(D)$.
\label{Thm:Fh}
\end{Thm}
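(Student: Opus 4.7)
The plan is to verify that $D$ is Fredholm on $L^2(M_0)$, apply the classical McKean-Singer identity, and then show that under the polynomial growth hypothesis the renormalized super-trace of the heat kernel coincides with the ordinary $L^2$ super-trace, from which the equality of the two indices is immediate.

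First, I would establish Fredholmness. A geometric Dirac operator $D = D^W$ is $\V$-elliptic, so its $\A$-principal symbol is invertible off the zero section. By the general Fredholm theory for pseudodifferential operators on Lie groupoids (cf.\ \cite{LN}, \cite{NWX}), in the Hausdorff and strongly amenable setting such an operator is Fredholm on $L^2(M_0)$ if and only if its principal symbol is invertible and its indicial symbol $\R(D) = \bigoplus_F \R_F(D)$ is pointwise invertible. Strong amenability is precisely what allows the injectivity of $C_r^{\ast}(\G) \hookrightarrow \L(\H)$ to upgrade invertibility in the symbolic calculus to invertibility modulo compacts in $\L(L^2(M_0))$. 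Together with the hypothesis on $\R(D)$, this produces a well-defined integer $\ind(D)$.

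Second, once Fredholmness is in hand, the classical McKean-Singer identity gives
\[
\ind(D) = \Tr_s(e^{-tD^2}), \qquad t > 0,
\]
where $\Tr_s$ denotes the ordinary $L^2$ super-trace on $M_0$. Polynomial growth of $\G$ enters here to guarantee that $e^{-tD^2}$ is trace-class on $L^2(M_0)$: it provides the required integrability of the reduced heat kernel along the diagonal, exactly as in the heat kernel theory for groupoids of subexponential growth.

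Third, the decisive step is the identification
\[
\TrV_s(e^{-tD^2}) = \Tr_s(e^{-tD^2}), \qquad t > 0.
\]
By definition $\TrV_s(e^{-tD^2})$ is the regularized value at $z = 0$ of $G_t(z) = \int_M \rho^z \, \tr_s k_t(x,x)\,d\mu(x)$, where $k_t$ is the reduced heat kernel produced by the functional calculus of Theorem \ref{Thm:fctcalc}. Pointwise invertibility of $\R(D)$ furnishes a uniform spectral gap for the indicial families at each face, and, combined with the groupoid heat kernel estimates and polynomial growth, translates into exponential decay of $\tr_s k_t(x,x)$ as $x$ approaches any face $F \in \F_1(M)$. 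Hence $\tr_s k_t(\cdot, \cdot)$ is already absolutely integrable, $G_t(z)$ is holomorphic at $z = 0$, and its value there agrees with $\int_M \tr_s k_t(x,x)\,d\mu(x) = \Tr_s(e^{-tD^2})$. Sending $t \to \infty$ in the resulting identity $\TrV_s(e^{-tD^2}) = \ind(D)$ then yields $\indV(D) = \ind(D)$.

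The main obstacle is precisely this last decay estimate: converting pointwise invertibility of $\R(D)$ into uniform exponential decay of the heat kernel diagonal towards the singular strata. This requires combining the functional calculus of Theorem \ref{Thm:fctcalc}, with which $e^{-tD^2}$ can be approximated by finite propagation operators coming from $\P$, the Hausdorffness (to avoid pathologies in the reduced $C^{\ast}$-algebra, so that spectral bounds on indicial families really transfer to norm bounds), and polynomial growth (to control the volumes of source-fibre balls when integrating the fibrewise bounds against the measure on $M$). Absent any of these hypotheses $G_t(z)$ may genuinely have a pole at $z = 0$ and $\indV(D)$ would differ from $\ind(D)$ by a nontrivial \emph{defect} term; the content of the theorem is that the stated hypotheses are sufficient to exclude such a defect.
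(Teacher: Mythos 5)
Your first step---deducing Fredholmness of $D$ from invertibility of the principal symbol together with pointwise invertibility of $\R(D)$, using the Hausdorffness, strong amenability and polynomial growth of $\G$---is exactly the paper's first step, which simply cites \cite{N}. The gap is in your second and third steps. You invoke the finite-time McKean--Singer identity $\Tr_s(e^{-tD^2})=\ind(D)$ for all $t>0$ and then claim $\TrV_s(e^{-tD^2})=\Tr_s(e^{-tD^2})$ because the supertrace density decays towards the singular strata. On a non-compact Lie manifold the operators $e^{-tD^{\ast}D}$ and $e^{-tDD^{\ast}}$ have continuous essential spectrum and are not trace class (polynomial growth of $\G$ does not make them so; $M_0$ has infinite volume), so the eigenvalue-pairing argument behind McKean--Singer is unavailable, and even where the pointwise supertrace density is integrable its integral is not independent of $t$. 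Indeed the $t$-derivative of the renormalized supertrace is a trace-defect term given by an eta-integrand of the indicial family (Melrose's commutator formula); this is precisely the source of the $\etaVc(D_1)$ and $\etaW(D_{\partial})$ terms in Theorem \ref{Thm:index}. If your identity held for every $t>0$, the renormalized supertrace would be constant in $t$ and those eta contributions would vanish whenever $\R(D)$ is invertible, which already fails for a manifold with a cylindrical end. Invertibility of $\R(D)$ does not buy a $t$-independent supertrace; it buys a spectral gap of $D^2$ above $0$ away from finitely many eigenvalues.

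The argument the paper actually appeals to, via \cite[Section 2.2]{LMP} and \cite[Theorem 1.2]{BS}, uses only the large-time limit appearing in the definition $\indV(D)=\lim_{t\to\infty}\TrV_s(e^{-tD^{\ast}D})-\TrV_s(e^{-tDD^{\ast}})$: once $D$ is Fredholm with $\R(D)$ pointwise invertible, the harmonic sections decay at the boundary faces, so the finite-rank projection $\Pi$ onto $\ker D$ has renormalized supertrace equal to its honest supertrace $\dim\ker D^{+}-\dim\ker D^{-}=\ind(D)$, and the spectral gap forces $\TrV_s(e^{-tD^2}-\Pi)\to 0$ as $t\to\infty$. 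Your proposal should be reorganized around this limit; as written, the central identity of your third step is false, and the role you assign to polynomial growth (trace-classness of $e^{-tD^2}$) is not what that hypothesis delivers---it enters in the Fredholm criterion and in making the renormalized trace well defined.
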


\begin{proof}
From \cite{N} we have that $D$ is Fredholm if and only if $\R(D)$ is pointwise invertible by the conditions imposed on the groupoid $\G$.
Assume that $D$ is Fredholm. Then $\indV(D) = \ind(D)$ follows by the argument given in \cite[Section 2.2]{LMP}, see also \cite[Theorem 1.2]{BS}. 
\end{proof}

\begin{Exa}
The conditions on the Lie groupoid as stated in the Theorem hold in numerous special cases of Lie manifolds. We refer to \cite{N} for an overview. 
\label{Exa:Fh}
\end{Exa}

\section*{Acknowledgements}

I am very grateful to Jean-Marie Lescure for his insightful remarks concerning Section \ref{III} of this work during my visit to Clermont-Ferrand in December 2016.
I also thank Bernd Ammann, Magnus Goffeng, Victor Nistor and Elmar Schrohe for useful discussions. 

\small

\end{document}